\numberwithin{equation}{section}
\newtheorem{theorem}{Theorem}[section]
\newtheorem{lemma}[theorem]{Lemma}
\newtheorem{definition}[theorem]{Definition}
\newtheorem{proposition}[theorem]{Proposition}
\newtheorem{corollary}[theorem]{Corollary}
\theoremstyle{remark}
\newtheorem{remark}{Remark}
\newcommand{\with}{\quad\hbox{with}\quad}
\newcommand{\andf}{\quad\hbox{and}\quad}
\def\wh{\widehat}
\def\wt{\widetilde}
\def\cC{{\mathcal C}}
\def\cD{{\mathcal D}}
\def\cP{{\mathcal P}}
\def\du{\delta\!u}
\def\dv{\delta\!v}
\def\dB{\delta\!B}
\def\dJ{\delta\!J}
\def\div{ \hbox{\rm div}\,  }
\def\curl{ \hbox{\rm curl}\,  }
\def\N{{\mathbb N}}
\def\R{{\mathbb R}}
\def\eps{\varepsilon}
\def\La{\Lambda}
\begin{document}
\title[\hfilneg \hfil ]
{The global solvability of  the Hall-magnetohydrodynamics system in critical Sobolev spaces}

\author{RAPHA\"{E}L DANCHIN AND JIN TAN}

\subjclass[2010]{35Q35; 76D03; 86A10}
\keywords{Hall-magnetohydrodynamics; Well-posedness; Sobolev spaces}

\begin{abstract}
We are concerned with the  3D incompressible Hall-magnetohydro\-dynamic system (Hall-MHD). 
Our first  aim is to provide the reader with an elementary proof of a global well-posedness result for small data with critical Sobolev regularity, in the spirit 
of Fujita-Kato's theorem \cite{Fk64}  for  the Navier-Stokes equations. 
Next,  we investigate  the long-time asymptotics of  global solutions 
of the Hall-MHD system that are in the Fujita-Kato regularity class. 
  A weak-strong uniqueness statement is also proven.  Finally, 
we consider the so-called 2$\frac12$D flows for the Hall-MHD system, and prove the global existence of strong solutions, assuming only that the  initial magnetic field is small.
\end{abstract}

\maketitle
\section{Introduction}
We are concerned with the following three dimensional incompressible resistive and viscous Hall-magnetohydrodynamics system:
\begin{align}\label{1.1}
&\partial_t{\mathnormal u}+u\cdot\nabla u+\nabla \mathnormal P=(\nabla\times \mathnormal B)\times \mathnormal B+\mu\Delta \mathnormal u\quad&\hbox{in }\ \R_+\times\R^3,\\\label{1.2}
&\div\mathnormal u=0\quad&\hbox{in }\ \R_+\times\R^3,\\\label{1.3}
&\partial_t{\mathnormal B}-\nabla\times(\mathnormal u \times \mathnormal B)+\eps\nabla\times((\nabla\times\mathnormal B) \times \mathnormal B)=\nu\Delta \mathnormal B
\quad&\hbox{in }\ \R_+\times\R^3, \\\label{1.4}
&(\mathnormal u, \mathnormal B)|_{t=0}=(\mathnormal u_{0}, \mathnormal B_{0})
\quad&\hbox{in }\ \R^3,
\end{align}
where the unknown functions  $\mathnormal u$, $\mathnormal B$ and $\mathnormal P$ represent the  velocity  field, the magnetic field and the scalar pressure, respectively.  The parameters $\mu$ and $\nu$ are the fluid viscosity and the magnetic resistivity, while the dimensionless positive number $\eps$ measures   the 
magnitude  of the Hall  effect compared to  the typical length scale of the fluid. 
For compatibility with \eqref{1.2},  we assume that $\div u_0=0$ 
and, for physical consistency,  since a magnetic field is a curl,
  we suppose  that $\div B_0=0,$ 
a property that is propagated by \eqref{1.3}. 
 \smallbreak
The above system is used to model the   
\emph{magnetic reconnection} phenomenon,  that cannot be explained by the  classical  MHD
system where   the \emph{Hall electric field}
$E_H:=\eps J\times B$ (here the current  $J$ is defined by $J:=\nabla\times B$) is neglected.
 The study of the Hall-MHD system has been initiated  by Lighthill  in \cite{Li60}. Owing to its  importance in the theory of space plasma,  like e.g. star formation, solar flares or geo-dynamo, it has received  
 lots of attention from physicists  (see  \cite{Ba01,Fo91,Hu03,Sh97,Wa04}).
\smallbreak
The Hall-MHD system has been considered in mathematics only rather recently.
In   \cite{Ac11},  Acheritogaray, Degond, Frouvelle and Liu  formally derived 
the Hall-MHD system both   from a two fluids system and  from a  kinetic model. Then, in \cite{Ch13}, Chae, Degond and  Liu showed the global existence of Leray-Hopf weak solutions as well as the local 
existence of classical solutions pertaining to data with large Sobolev regularity. 
Weak solutions have been further investigated 
by  Dumas and Sueur in \cite{Ds14} both for the Maxwell-Landau-Lifshitz system and for the Hall-MHD system. In \cite{Ch14}, blow-up criteria  and the global existence of smooth solutions  emanating from  small initial data have been obtained. In \cite{We161,We162}, Weng studies the long-time behaviour and obtained optimal space-time decay rates of strong solutions. More recently, \cite{Be16,Wz15,Wz19}  established the well-posedness of strong solutions with improved regularity conditions for initial data in sobolev  or Besov spaces. 

 Examples of smooth data with arbitrarily large  $L^\infty$ norms
giving rise to  global unique solutions have been exhibited  in \cite{Li19}.  Very recently, 
in \cite{Da19}, the authors of the present paper proved well-posedness 
in critical Besov spaces and pointed out that better results may be achieved 
if $\mu=\nu.$ 
\smallbreak
In order to explain what we mean by critical regularity, 
let us first recall how it goes for the following incompressible Navier-Stokes equations:
$$\left\{\begin{aligned}
&\partial_t{\mathnormal u}+{\rm{div}}(u\otimes u)+\nabla \mathnormal P=\mu\Delta \mathnormal u&\qquad\hbox{in }\ \R_+\times\R^3,\\
&\div\mathnormal u=0&\qquad\hbox{in }\ \R_+\times\R^3,\\
&u|_{t=0}=u_{0}&\qquad\hbox{in }\ \R^3
\end{aligned}\right.\leqno(NS)$$
where
$$\bigl({\rm{div}}(v\otimes w)\bigr)^{j}{:=}\sum_{k=1}^{3}\partial_{k}(v^{j}w^{k}).$$
Clearly, $(NS)$ is invariant for all positive $\lambda$ by the rescaling
\begin{equation}\label{eq:solutionNS}
(u,P)(t, x)\leadsto (\lambda u,\lambda^2 P) (\lambda^{2}t, \lambda x)\andf
u_0(x)\leadsto \lambda u_0(\lambda x).
\end{equation}
For that reason,   global well-posedness results for $(NS)$ in  optimal  spaces  (in terms of regularity)
may be achieved by means of contracting mapping arguments only if using 
norms for which $u_0$ and 
the solution $(u,P)$  have the above scaling invariance. 
\medbreak
 In contrast with the Navier-Stokes equations, 
 the Hall-MHD system does not have a genuine scaling invariance (unless
 if the Hall-term is neglected). 
 In a recent paper of ours \cite{Da19}, we observed that 
 some scaling invariance does exist if one considers the current function  $J=\nabla\times B$ 
 to be an additional unknown. Then, the scaling invariance of $(u,B,J)$ is  the same as  the velocity in \eqref{eq:solutionNS}. 
 
 In \cite{Da19}, we also pointed out that the Hall-MHD system better behaves if
 $\mu=\nu$ since, although being still quasi-linear, the Hall term disappears
 in the energy estimate involving the so-called  \emph{velocity of electron $v:=u-\eps J.$}
 Indeed, let us consider  $v$  as an additional unknown
 and look at the following \emph{extended formulation of the Hall-MHD system}:
\begin{equation}\label{1.100}\left\{\begin{aligned}
&\partial_t u-\mu\Delta u=B\cdot\nabla B-u\cdot\nabla u-\nabla \pi,\\
&\partial_t{\mathnormal B}-\mu\Delta B=\nabla\times(v\times B),\\
&\partial_tv-\mu\Delta v=B\cdot\nabla B-u\cdot\nabla u-\eps\nabla\times((\nabla\times v)\times B)\\
&\hspace{4cm}+\nabla\times(v\times u)+2\eps\nabla\times(v\cdot\nabla B)-\nabla \pi,\\
&\div u=\div B=\div v=0.
\end{aligned}\right.\end{equation}
That (redundant) system is still quasilinear but,  owing to 
\begin{equation}\label{v1}
(\nabla\times w\,|\,v)=(w\,|\, \nabla\times v),
\end{equation}
where $(\cdot\,|\,\cdot)$ denotes the scalar product in $L^2(\R^3),$
the most nonlinear term cancels out when performing an energy method, since
\begin{equation}\label{1.1100}
(\nabla\times((\nabla\times v)\times B)\,|\,v)=0.
\end{equation}
Our primary goal is to provide an elementary proof  of  a  Fujita-Kato type result 
for the Hall-MHD system in the spirit of the celebrated work \cite{Fk64} (see also \cite{Ch92}).
In our context, this amounts to proving that System \eqref{1.1}--\eqref{1.3}
supplemented with initial data $(u_0,B_0)$ such that  
$(u_0,B_0,v_0)$ is small enough in the critical homogeneous Sobolev space $\dot H^{\frac12}(\R^3)$
admits a unique global solution.  In passing, we will obtain 
some informations on the long time behavior of the solutions, similar to those
that are presented  for the incompressible Navier-Stokes equations in e.g. \cite[Chap. 5]{Ba11}.

 Our second purpose is to prove    a weak-strong uniqueness result 
  for the  Leray-Hopf weak solutions of Hall-MHD, namely
  that all weak solutions coincide with the unique Fujita-Kato solution 
  whenever the latter one exists. That result turns out to be less sensitive
 to the very structure of the system, and is valid for all values of $\mu,$ $\nu$ and $\eps.$ 
    
Finally,  as proposed by  Chae and Lee in \cite{Ch14}, 
we will consider the $2\frac12$D flows for the Hall-MHD system, 
that is, $3$D flows depending only on two space variables. 
This issue is well known for the incompressible Navier-Stokes equations 
 (see e.g. the book by  Bertozzi and Majda  \cite{BM}).
In our case, the corresponding system reads:
\begin{align}
&\partial_t{\mathnormal u}+\wt u\cdot\wt\nabla u+\wt\nabla\pi=\wt B\cdot\wt\nabla B+\mu\wt\Delta u&\hbox{in }\ \R_+\times\R^2,\label{1.1a}\\\label{1.2a}
&\widetilde{\div}\wt u=0&\hbox{in }\ \R_+\times\R^2,\\
&\partial_t{\mathnormal B}+\wt u\cdot\wt\nabla B+\eps\wt B\cdot\wt\nabla j-\eps\wt j\cdot\wt\nabla B=\nu\wt\Delta B+\wt B\cdot\wt\nabla u&\hbox{in }\ \R_+\times\R^2, 
\label{1.3a}\\
&(\mathnormal u, \mathnormal B)|_{t=0}=(\mathnormal u_{0},\mathnormal B_{0})
&\hbox{in}\ \R^2\label{1.4a},
\end{align}
where the unknowns $u$ and $B$ are functions from $\R_+\times\R^2$ to $\R^3,$
 $\wt u:=(u^1, u^2),$ $\wt{B}:=(B^1, B^2),$ $\wt\nabla:=(\partial_1, \partial_2),$ 
$\wt\div :=\wt\nabla\cdot,$
$\wt\Delta:=\partial_1^2+\partial_2^2$ and 
 $$   j:=\wt\nabla\times B= \left(  \begin{array}{c}
          \partial_2 B^{3} \\
         -\partial_1B^{3}\\
         \partial_1 B^{2}-\partial_2 B^{1}
          \end{array}
\right)\cdotp$$

 A small modification of the proof of \cite{Ch13}  
  allows to establish that for any initial data $(u_0, B_0)$ in $L^2(\R^2;\R^3)$   with $\wt\div \wt u_0=\wt\div \wt B_0=0,$ 
 there exists a global-in-time Leray-Hopf solution $(u, B)$
 of \eqref{1.1a}--\eqref{1.4a}  that satisfies:
\begin{multline}\label{energyineq}
\|u(t)\|_{L^2(\R^2)}^2+\|B(t)\|_{L^2(\R^2)}^2 
+2\int_0^t\bigl(\mu\|\wt\nabla u\|_{L^2(\R^2)}^2+\nu\|\wt\nabla B\|_{L^2(\R^2)}^2\bigr)\,d\tau\\
\leq
\|u_0\|_{L^2(\R^2)}^2+\|B_0\|_{L^2(\R^2)}^2.\end{multline} 
Whether that solution is unique and  equality is true in \eqref{energyineq} are open questions.
The difficulty here is that, unlike for the   $2\frac12$D Navier-Stokes equations or 
for the $2\frac12$D MHD flows
 with no Hall term, the two-dimensional system satisfied by the first
 two components of the flow is coupled with the equation satisfied
 by the third component,   through the term $\wt B\cdot\wt\nabla j-\wt j\cdot\wt\nabla B,$
 thus hindering any attempt to prove the global well-posedness for large data by 
 means of classical arguments. 
 
Our aim here is to take advantage of  the special structure of the 
system so as  to get a global well-posedness statement 
 for $2\frac12$D data such that \emph{only the initial magnetic field is  small}.
 Since it  has been pointed out in \cite{Ch14} that controlling
just  $j$ in the space $L^2(0,T;{\rm BMO}(\R^2))$ prevents blow-up 
of a smooth solution at time $T$ and because 
the space  $\dot{H}^1(\R^2)$ is continuously embedded in ${\rm BMO}(\R^2),$ 
it is natural to  look for a control on $j$ in the space $L^2(0,T; \dot{H}^1(\R^2))$ for all $T>0.$

For $2\frac12$D flows, the critical Sobolev regularity corresponds to 
$u_0$ in $L^2(\R^2)$ and to $B_0$ in $H^1(\R^2),$ 
and we shall see that, indeed, in the case of small data, one can establish 
rather easily a global well-posedness result at this level of regularity. 
For large data, owing to the coupling 
between  the velocity  and magnetic fields through the term $\wt\nabla\times(u\times B),$
we do not know how to achieve global existence at this level 
of regularity.
Then, our idea is to look at the equation 
satisfied by  the  vorticity $\omega:=\wt\nabla\times u,$ namely 
$$
\partial_t{\omega}+\wt\nabla\times(\omega\times u)=\wt\nabla\times(j\times B)+\mu\wt\Delta w.
$$
In the case  $\mu=\nu,$ the vector-field  $E:=\eps\omega+B$  thus satisfies
\begin{equation}\label{eq:E}
\partial_t E-\wt E\cdot\wt\nabla u+\wt u\cdot\wt\nabla E=\mu\wt\Delta E.
\end{equation}
From that identity, obvious energy arguments and \eqref{energyineq}, 
it is easy to get a global control of $E$
in the space $L^2(\R_+;\dot H^1(\R^2)),$ 
and, finally on $j$ on $L^2(\R_+;\dot H^1(\R^2))$
provided $B_0$ is small in $H^1(\R^2).$ 
We then get a global well-posedness statement, assuming only 
that the magnetic field is small.  

\medbreak
We end this introductory part presenting  a few notations. 
As usual,  we denote by $C$ harmless  positive constants that may change from line to line, and  $A\lesssim B$ means $A\leq C B.$ For $X$ a Banach space, $p\in[1, \infty]$ and 
$T>0$, the notation $L^p(0, T; X)$ or $L^p_T(X)$ designates the set of measurable functions $f: [0, T]\to X$ with $t\mapsto\|f(t)\|_X$ in $L^p(0, T)$, endowed with the norm $\|\cdot\|_{L^p_{T}(X)} :=\|\|\cdot\|_X\|_{L^p(0, T)},$ and agree that $\mathcal C([0, T];X)$ denotes the set of continuous functions from $[0, T]$ to $X$.  Sometimes, we will use the notation $L^p(X)$ to designate the space
$L^p(\R_+;X)$ and $\|\cdot\|_{L^p(X)}$ for the associated norm. 
 We will keep the same notations for multi-component functions, namely  for
 $f:[0,T]\to X^m$ with $m\in\N.$


\section{Main results}
Our first result, that has to be compared with the Fujita-Kato theorem for the incompressible Navier-Stokes equations
states that the Hall-MHD system is indeed globally well-posed 
if  $u_0,$ $B_0$ and $v_0$ are small in $\dot H^{\frac12}(\R^3).$ 
\begin{theorem}\label{Th_1} Assume that $\mu=\nu.$
Let $(u_0, B_0)\in{\dot H^{\frac{1}{2}}}(\R^3)$ with $\div u_{0}=\div B_{0}=0,$
and  $J_0:=\nabla\times B_0\in{\dot H^{\frac{1}{2}}}(\R^3).$ 
There exists a constant $c>0$  such that, if 
\begin{equation}\label{smallcon}
\|u_0\|_{\dot{H}^\frac{1}{2}(\mathbb{R}^3)}+\|B_0\|_{\dot{H}^\frac{1}{2}(\mathbb{R}^3)}+\|u_0-\eps J_0\|_{\dot{H}^\frac{1}{2}(\mathbb{R}^3)} < c\mu,
\end{equation}
 then there exists a unique global solution $$(u, B)\in\mathcal C_b(\mathbb R_+; \dot H^{\frac{1}{2}}(\R^3))\cap L^2(\mathbb R_+; \dot H^{\frac{3}{2}}(\R^3))$$ to the Cauchy problem \eqref{1.1}-\eqref{1.4}, such that $J:=\nabla\times B\in L^\infty(\mathbb R_+; \dot H^{\frac{1}{2}}(\R^3))\cap L^2(\mathbb R_+; \dot H^{\frac{3}{2}}(\R^3)).$ Furthermore,  for all $t\geq t_0\geq0,$ one has
 \begin{equation}\label{2.888}
\|(u,B,u-\eps J)(t)\|_{\dot{H}^\frac{1}{2}}^2+\frac{\mu}{2}\int_{t_0}^t \|(u,B,u-\eps J)\|_{\dot{H}^\frac{3}{2}}^2\,d\tau
\leq \|(u,B,u-\eps J)(t_0)\|_{\dot{H}^\frac{1}{2}}^2.
\end{equation}
 In particular, the function
 \begin{equation*}
 t\mapsto\|u(t)\|_{\dot{H}^\frac{1}{2}}^2+\|B(t)\|_{\dot{H}^\frac{1}{2}}^2+\|u(t)-\eps J(t)\|_{\dot{H}^\frac{1}{2}}^2\end{equation*}
 is nonincreasing.
 \medbreak
 If, in addition, $u_0\in H^s$ and  $B_0\in H^{s+1}$ for some $s\geq0$ with $s\not=1/2,$ then
$(u,B)\in\cC_b(\R_+;H^s\times H^{s+1}),$ $\nabla u\in L^2(\R_+;H^s)$ 
and~$\nabla B\in L^2(\R_+;H^{s+1}).$
 \end{theorem}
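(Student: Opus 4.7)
My plan is to work directly with the extended formulation \eqref{1.100} in the unknowns $(u,B,v)$, exploiting the cancellation \eqref{1.1100} that is available precisely when $\mu=\nu$, and to follow the classical Fujita-Kato scheme at the critical level $\dot H^{1/2}(\R^3)$. The strategy splits naturally into four steps: (i) \emph{a priori} estimates in the critical space, closed by smallness; (ii) construction of approximate solutions via a Friedrichs truncation and passage to the limit; (iii) uniqueness; (iv) propagation of higher regularity.

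For step (i), I apply $\Lambda^{1/2}:=(-\Delta)^{1/4}$ to each equation of \eqref{1.100}, take the $L^2$ inner product with $\Lambda^{1/2}u$, $\Lambda^{1/2}B$ and $\Lambda^{1/2}v$, and add. The Laplacians produce the positive dissipation proportional to $\mu(\|u\|_{\dot H^{3/2}}^2+\|B\|_{\dot H^{3/2}}^2+\|v\|_{\dot H^{3/2}}^2)$. The structurally worst term, $\eps\nabla\times((\nabla\times v)\times B)$ in the $v$-equation, would give a full derivative loss if tested naively, but the pairing with $v$ is saved by \eqref{v1}-\eqref{1.1100}: the principal contribution vanishes and only a commutator remains, which is handled by the standard 3D product law $\|fg\|_{\dot H^{-1/2}}\lesssim \|f\|_{\dot H^{1/2}}\|g\|_{\dot H^{1/2}}$. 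The remaining transport and stretching terms fit the same law, provided one uses $\|\nabla B\|_{\dot H^\sigma}\simeq\|J\|_{\dot H^\sigma}=\eps^{-1}\|u-v\|_{\dot H^\sigma}$ (a consequence of $\div B=0$ and $\eps J=u-v$) to re-express $\eps\nabla B$ in terms of the critical norms of $u$ and $v$. Setting
\begin{equation*}
X(t):=\|(u,B,v)(t)\|_{\dot H^{1/2}}^2,\qquad Y(t):=\|(u,B,v)(t)\|_{\dot H^{3/2}}^2,
\end{equation*}
I expect an inequality of the form $\tfrac{d}{dt}X+2\mu Y\le C\sqrt{X}\,Y$. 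Provided $\sqrt{X(0)}<\mu/C$, which is exactly what \eqref{smallcon} guarantees for $c$ small enough, a continuity argument delivers the global bound $X(t)+\mu\int_0^t Y\,d\tau\le X(0)$; replaying the same computation on $[t_0,t]$ yields \eqref{2.888} together with the stated monotonicity.

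For step (ii), I build approximate solutions $(u_n,B_n,v_n)$ by a Friedrichs truncation: freeze all frequencies inside a ball of radius $n$ and solve the resulting ODE system in $L^2$, for which the above \emph{a priori} estimate is preserved uniformly in $n$. Compactness on bounded domains together with a diagonal extraction then produces a limit satisfying \eqref{1.1}-\eqref{1.4} in the distributional sense; time continuity into $\dot H^{1/2}$ is recovered by using the equations to control $\partial_t(u,B,v)$ in a negative Sobolev space and applying a classical interpolation. For step (iii), I compare two solutions at the slightly lower regularity level $\dot H^{-1/2}$ in order to bypass the one-derivative loss carried by the Hall term; the difference satisfies a linear perturbative equation whose coefficients lie in $L^2_t(\dot H^{3/2})$, and a Gr\"onwall argument closes the estimate.

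For step (iv), the critical-level smallness is valid for all positive times, so higher Sobolev regularity can be propagated globally. Applying $\Lambda^s$ to \eqref{1.100} and testing against $\Lambda^s(u,B,v)$, every bilinear term is handled by the tame product law
\begin{equation*}
\|fg\|_{\dot H^s}\lesssim \|f\|_{\dot H^{1/2}}\|g\|_{\dot H^{s+1}}+\|g\|_{\dot H^{1/2}}\|f\|_{\dot H^{s+1}},\quad s\ge0,\ s\ne 1/2,
\end{equation*}
the exclusion $s=1/2$ corresponding exactly to the logarithmic endpoint failure of this estimate. The smallness of $\|(u,B,v)\|_{\dot H^{1/2}}$ absorbs the leading contribution into the dissipation, and a Gr\"onwall inequality yields $(u,v)\in\cC_b(\R_+;H^s)$ together with $\nabla(u,v)\in L^2_t(H^s)$; the corresponding control for $B$ follows from $\|\nabla B\|_{\dot H^\sigma}\simeq\eps^{-1}\|u-v\|_{\dot H^\sigma}$. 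The main anticipated obstacle is the Hall term at level $\dot H^s$: no clean analogue of the cancellation \eqref{1.1100} survives, so I would need a Kato-Ponce type commutator estimate in order to convert the apparent derivative loss into the dissipation, by interpolating between $\dot H^{1/2}$ (where the smallness lives) and $\dot H^{s+3/2}$ (where the parabolic gain lies).
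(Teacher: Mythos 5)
Your steps (i), (ii) and (iv) follow essentially the same route as the paper: the $\dot H^{1/2}$ energy estimate on the extended system \eqref{1.100} using the cancellation \eqref{1.1100} and a Kato--Ponce commutator (Lemma \ref{Le_28}) for the Hall term, a regularisation/compactness construction, and the same $\Lambda^s$ estimates with tame product and commutator laws for higher regularity. (The paper truncates the \emph{data} to an annulus and invokes the classical theory of \cite{Ch13} together with the continuation criterion of \cite{Ch14} rather than a Friedrichs scheme; your variant is workable provided you check that the cancellation \eqref{1.1100} and the constraint $v_n=u_n-\nabla\times B_n$ survive the spectral truncation, and that you cut low frequencies as well so that the truncated data lie in every Sobolev space.)

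The genuine gap is in step (iii). Measuring the difference at the lower level $\dot H^{-1/2}$ does not ``bypass the one-derivative loss carried by the Hall term'': the loss is independent of the level at which you estimate. At level $\dot H^{\sigma}$ the dissipation controls $\delta v:=v_1-v_2$ in $L^2_t(\dot H^{\sigma+1})$, while the term $\nabla\times((\nabla\times\delta v)\times B_2)$ tested against $\delta v$ produces a contribution of size $\|(\nabla\times\delta v)\times B_2\|_{\dot H^{\sigma}}\|\delta v\|_{\dot H^{\sigma+1}}\lesssim\|B_2\|_{\dot H^{3/2}}\|\delta v\|_{\dot H^{\sigma+1}}^2$ (and even this product law is an endpoint case). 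Since $B_2$ is only in $L^2_t(\dot H^{3/2})$ and not bounded pointwise in time in $\dot H^{3/2}$, Gr\"onwall cannot absorb this into the dissipation; the claim that coefficients in $L^2_t(\dot H^{3/2})$ suffice is exactly what fails. The only rescue is the algebraic cancellation $\bigl((\nabla\times\delta v)\times B_2\,|\,\nabla\times\delta v\bigr)=0$, which is exact only for the flat $L^2$ pairing; at $\dot H^{-1/2}$ you would need a commutator estimate for the negative-order operator $\Lambda^{-1/2}$, which Lemma \ref{Le_28} does not cover (it requires $s>0$) and which you do not supply. This is precisely why the paper estimates the difference in $L^\infty_T(L^2)\cap L^2_T(\dot H^1)$, recovering $\nabla\delta\!B$ from $\delta u-\delta v$ via $\curl^{-1}$ so that $\|\nabla\delta\!B\|_{L^2}\sim\|\delta u-\delta v\|_{L^2}$. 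A further point you skip: two solutions in $\dot H^{1/2}$ need not have a difference lying in $\dot H^{-1/2}$ (homogeneous Sobolev spaces are not nested downward), so membership of the difference in the chosen space must itself be justified from the equations, as the paper does for $L^2$ by checking that the source terms $R_1,\dots,R_5$ belong to $L^2_T(\dot H^{-1})$.
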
 
 \begin{remark} The first part of the above 
 theorem has been proved in \cite{Da19} by a different method
 that does not allow  to get  \eqref{2.888}. 
 A small variation on the proof  yields  local well-posedness  if assuming only 
that  $\|u_0-\eps J_0\|_{\dot H^{\frac12}(\R^3)}$ is small. In contrast with the incompressible Navier-Stokes equations however, whether a local-in-time result may be proved without any smallness condition 
 (or more regularity,  see e.g. \cite[Th. 2.2]{Da19}) is an open question.\end{remark}


\begin{corollary}\label{C_1}
If $(u, B)$ denotes the solution given by Theorem \ref{Th_1} and if, in addition, $(u_0, B_0)$ is in $L^2(\mathbb{R}^3)$, then $(u, B)$ is continuous with values in $L^2(\mathbb{R}^3),$  satisfies the following energy balance for all $t\geq0:$ 
\begin{equation}\label{eq:energy}
\|u(t)\|_{L^2}^2+\|B(t)\|_{L^2}^2 
+2\mu\int_0^t\bigl(\|\nabla u\|_{L^2}^2+\|\nabla B\|_{L^2}^2\bigr)\,d\tau=
\|u_0\|_{L^2}^2+\|B_0\|_{L^2}^2,
\end{equation} 
and we have
 \begin{equation}\label{eq:decay} 
 \lim_{t\to+\infty} \bigl(\|u(t)\|_{\dot{H}^\frac{1}{2}}^2+\|B(t)\|_{\dot{H}^\frac{1}{2}}^2+\|J(t)\|_{\dot{H}^\frac{1}{2}}^2\bigr)=0.
 \end{equation}
\end{corollary}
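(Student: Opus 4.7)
To establish \eqref{eq:energy}, I would test \eqref{1.1} against $u$ and \eqref{1.3} against $B$. The convective and pressure terms vanish thanks to $\div u=\div B=0$, while the identity \eqref{v1} together with $(a\times b)\cdot a=0$ makes the Lorentz-force cross terms cancel,
\[((\nabla\times B)\times B\,|\,u) + (\nabla\times(u\times B)\,|\,B) = ((\nabla\times B)\times B\,|\,u) + (u\times B\,|\,\nabla\times B) = 0,\]
and kills the Hall contribution via
\[(\nabla\times((\nabla\times B)\times B)\,|\,B) = ((\nabla\times B)\times B\,|\,\nabla\times B) = 0.\]
The regularity supplied by Theorem \ref{Th_1}, namely $(u,B,J)\in L^\infty(\dot H^{1/2})\cap L^2(\dot H^{3/2})$, suffices to make every product meaningful; a rigorous derivation would proceed by approximating $(u_0,B_0)$ with smooth data, performing the estimate on the resulting strong solutions, and passing to the limit. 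Once \eqref{eq:energy} is established, the equations force weak-$L^2$ continuity in time, and continuity of the $L^2$ norm upgrades this to strong continuity in $L^2$.

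For the decay, \eqref{2.888} shows $t\mapsto\|(u,B,v)(t)\|_{\dot H^{1/2}}^2$ to be nonincreasing on $\R_+$ with $\int_0^\infty \|(u,B,v)(\tau)\|_{\dot H^{3/2}}^2\,d\tau<\infty$, so some sequence $t_n\to\infty$ satisfies $\|(u,B,v)(t_n)\|_{\dot H^{3/2}}\to 0$. The three-dimensional interpolation
\[\|f\|_{\dot H^{1/2}}\lesssim \|f\|_{L^2}^{2/3}\|f\|_{\dot H^{3/2}}^{1/3}\]
then forces $\|(u,B,v)(t_n)\|_{\dot H^{1/2}}\to 0$, provided each of $u$, $B$, $v=u-\eps J$ is uniformly bounded in $L^2$. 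For $u$ and $B$, \eqref{eq:energy} supplies the bound; for $v$ it is enough to control $J=\nabla\times B$ in $L^\infty(L^2)$, which follows from $\|B\|_{\dot H^1}\lesssim \|B\|_{L^2}^{1/3}\|B\|_{\dot H^{3/2}}^{2/3}$ combined with $B\in L^\infty(L^2)$ and $B\in L^\infty(\dot H^{3/2})$ (the latter being equivalent to $J\in L^\infty(\dot H^{1/2})$). Combining the convergence along $t_n$ with monotonicity yields $\|(u,B,v)(t)\|_{\dot H^{1/2}}\to 0$ as $t\to\infty$; since $\|J\|_{\dot H^{1/2}}\leq \eps^{-1}(\|u\|_{\dot H^{1/2}}+\|v\|_{\dot H^{1/2}})$, this produces \eqref{eq:decay}.

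The delicate step is the energy identity itself: because the Hall term is quasi-linear in $B$, the cancellation argument must be justified at the Fujita-Kato level of regularity, where second derivatives of $B$ are only available in $L^2$ in space and time. Once this is secured by regularization, the remaining pieces follow by standard functional-analytic and interpolation arguments.
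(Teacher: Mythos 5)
Your proposal is correct, and the two halves compare differently with the paper. For the energy balance and the $L^2$ continuity you follow essentially the paper's route (regularize the data, use the cancellations for the smooth approximations, pass to the limit); the one point you gloss over is precisely where the work lies: a weak limit only preserves the energy \emph{inequality}, so to keep the \emph{equality} one must show that the approximating solutions $(u^n,B^n)$ form a Cauchy sequence in $L^\infty(\R_+;L^2)\cap L^2(\R_+;\dot H^1)$ -- the paper does this by rerunning the stability estimates from the uniqueness proof. Your alternative remark that the Fujita--Kato regularity makes every product meaningful does point to a direct justification (the solution is in $L^4_{loc}(L^4)$, so one is in a Lions-type energy-equality regime), but as written the limit-passage step is asserted rather than proved.

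For the decay \eqref{eq:decay} your argument is genuinely different from, and more elementary than, the paper's. The paper first locates a time $t_0$ with $v(t_0)\in L^2$ (using $B\in L^4(\R_+;\dot H^1)$), then runs an $L^2$ energy estimate on the $v$-equation from $t_0$ onward, absorbing the nonlinearity by smallness, to conclude $(u,B,v)\in L^4(t_0,\infty;\dot H^{1/2})$ and hence the existence of times where the $\dot H^{1/2}$ norm is arbitrarily small. You bypass the $v$-equation entirely: since $J\in L^\infty(\R_+;\dot H^{1/2})$ is equivalent (via \eqref{eq:equivnorm}) to $B\in L^\infty(\R_+;\dot H^{3/2})$, interpolation with $B\in L^\infty(\R_+;L^2)$ gives $J\in L^\infty(\R_+;L^2)$, hence $v\in L^\infty(\R_+;L^2)$; combined with $\int_0^\infty\|(u,B,v)\|_{\dot H^{3/2}}^2\,d\tau<\infty$ and the interpolation $\|f\|_{\dot H^{1/2}}\lesssim\|f\|_{L^2}^{2/3}\|f\|_{\dot H^{3/2}}^{1/3}$, this yields a sequence $t_n\to\infty$ along which the $\dot H^{1/2}$ norm vanishes, and the monotonicity \eqref{2.888} upgrades this to the full limit. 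All the exponents and the equivalence of norms check out, and no smallness or absorption argument is needed in this step; what the paper's route buys instead is the extra quantitative information $(u,B,v)\in L^4(t_0,\infty;\dot H^{1/2})$, which is reused in the proof of Corollary \ref{C_2}.
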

The following corollary states that  global solutions (even if  large and with infinite energy) satisfying a suitable integrability 
property have to decay to $0$ at infinity.
\begin{corollary}\label{C_2}   Assume that  $(u_0, B_0)\in{\dot H^{\frac{1}{2}}}(\R^3)$ with $\div u_{0}=\div B_{0}=0$
and  $J_0\in{\dot H^{\frac{1}{2}}}(\R^3).$ 
Suppose in addition that  the Hall-MHD system
with $\mu=\nu$ supplemented with initial data $(u_0,B_0)$ admits a global solution  $(u,B)$ 
 such that
 $$(u, B,\nabla\times B)\in L^4(\mathbb R_+; \dot H^{1}(\R^3)).$$ 
  Then, $(u,B)$ has the regularity properties of Theorem \ref{Th_1}, and   
  \eqref{eq:decay} is satisfied.
  
  In particular, all the solutions constructed in Theorem \ref{Th_1} satisfy \eqref{eq:decay}. 
  \end{corollary}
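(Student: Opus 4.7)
The plan is to work in the extended formulation~\eqref{1.100} with the unknown $f := (u, B, v)$, where $v := u - \eps J$. Since $J = \eps^{-1}(u - v)$, the hypothesis $(u, B, \nabla\times B) \in L^4(\R_+; \dot H^1)$ is equivalent to $f \in L^4(\R_+; \dot H^1)$, and the Hall cancellation~\eqref{1.1100} together with this integrability form the two pillars of the proof.

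The first step is to propagate regularity by means of an $\dot H^{1/2}$ energy estimate on~\eqref{1.100}. The quasilinear Hall term in the $v$-equation is controlled at leading order by~\eqref{1.1100}, while the remaining quadratic nonlinearities are bounded by the three-dimensional product law $\|gh\|_{\dot H^{1/2}} \lesssim \|g\|_{\dot H^1}\|h\|_{\dot H^1}$ used in dualized form. Young's inequality then yields
\begin{equation*}
\frac{d}{dt}\|f\|_{\dot H^{1/2}}^2 + \mu\,\|f\|_{\dot H^{3/2}}^2 \leq \frac{C}{\mu}\,\|f\|_{\dot H^1}^4,
\end{equation*}
and, since the right-hand side is integrable by hypothesis, a time integration produces $f \in L^\infty(\R_+; \dot H^{1/2}) \cap L^2(\R_+; \dot H^{3/2})$, whence the full Theorem~\ref{Th_1} regularity class.

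For the decay~\eqref{eq:decay}, I would argue via mild-solution considerations using tail smallness. Given $\eta > 0$, pick $t_0$ large enough that $\|f\|_{L^4((t_0,\infty); \dot H^1)} < \eta$. For $t \geq t_0$, write the Duhamel representation $u(t) = e^{\mu(t-t_0)\Delta} u(t_0) + \int_{t_0}^t e^{\mu(t-s)\Delta} N_u(s)\,ds$ (and similarly for $B$ and $v$). The linear semigroup tends to $0$ in $\dot H^{1/2}$ as $t\to\infty$ by Plancherel and Lebesgue's dominated convergence; the Duhamel integral is controlled in $L^\infty_t \dot H^{1/2}$ by $\|N\|_{L^2_t \dot H^{-1/2}}$ through heat maximal regularity, which is in turn bounded by $\|f\|_{L^4_t \dot H^1}^2 \lesssim \eta^2$ via the same product rules. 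Sending $t\to\infty$ first and then $\eta\to 0$ yields $\|(u, B)(t)\|_{\dot H^{1/2}} \to 0$; the decay of $\|v(t)\|_{\dot H^{1/2}}$ follows from applying the same scheme to the $v$-equation, and then $\|J(t)\|_{\dot H^{1/2}} = \|\eps^{-1}(u-v)(t)\|_{\dot H^{1/2}} \to 0$.

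The main obstacle will be the $v$-component of the Duhamel step: the Hall nonlinearity $\eps\nabla\times((\nabla\times v)\times B)$ contains second derivatives of $v$, forbidding a direct $\dot H^{-1/2}$ bound by a naive product estimate. Exploiting $\div B = \div(\nabla\times v) = 0$ to rewrite it in flux form $\eps\partial_k\bigl(B^k(\nabla\times v) - (\nabla\times v)^k B\bigr)$, and invoking the extra $L^2_t\dot H^{3/2}$ regularity of $f$ already secured in the first step, provides the required control at the endpoint of the product law. The closing ``In particular'' statement is then immediate: the interpolation $\|f\|_{\dot H^1}^2 \leq \|f\|_{\dot H^{1/2}}\|f\|_{\dot H^{3/2}}$ shows that every Fujita-Kato solution of Theorem~\ref{Th_1} lies in $L^4_t \dot H^1$, so the first half of the corollary applies to it.
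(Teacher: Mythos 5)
Your first step does not go through, and it fails at precisely the point where the smallness assumption of Theorem \ref{Th_1} is used in an essential way. The differential inequality
\begin{equation*}
\frac{d}{dt}\|f\|_{\dot H^{1/2}}^2+\mu\,\|f\|_{\dot H^{3/2}}^2\leq \frac C\mu\,\|f\|_{\dot H^{1}}^4
\end{equation*}
cannot be derived for the extended system, because the Hall term is quasilinear rather than a ``remaining quadratic nonlinearity''. After the cancellation \eqref{1.1100} one is left with the commutator
$$A_6=-\bigl(\Lambda^{\frac12}((\nabla\times v)\times B)-(\Lambda^{\frac12}\nabla\times v)\times B\,\big|\,\Lambda^{\frac12}\nabla\times v\bigr),$$
and its sharp bound (see the proof of Proposition \ref{p:sob}) contains the piece $\|\Lambda^{\frac12}B\|_{L^6}\|\nabla\times v\|_{L^3}\|v\|_{\dot H^{3/2}}\lesssim \|J\|_{\dot H^{1/2}}\|v\|_{\dot H^{3/2}}^2.$ This term carries \emph{two} powers of the dissipation norm and only one power of the critical, non-time-integrated quantity $\|J\|_{\dot H^{1/2}}$, so Young's inequality cannot convert it into $C\mu^{-1}\|f\|_{\dot H^1}^4+\frac\mu4\|f\|_{\dot H^{3/2}}^2$; it can only be absorbed when $\sup_t\|J(t)\|_{\dot H^{1/2}}$ is small, which is exactly the hypothesis the corollary drops. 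The assumption $f\in L^4(\R_+;\dot H^1)$ provides time integrability but no pointwise-in-time smallness, so the Gronwall closure fails; and since your decay argument relies on the $L^2_t(\dot H^{3/2})$ bound produced by this first step, the proof collapses here. (The same criticality resurfaces in the Duhamel step for $v$ that you flag yourself: the flux-form rewriting still requires $B\otimes(\nabla\times v)$ in $\dot H^{1/2}$, which costs a full $\dot H^2$ derivative on $v$ or on $B$ that is not available.)

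The paper circumvents this by a Gallagher--Iftimie--Planchon type splitting instead of a direct critical energy estimate. One decomposes $(u_0,B_0,v_0)$ into a low-frequency part, small in $\dot H^{1/2}$, to which Theorem \ref{Th_1} applies (thus supplying the smallness needed to absorb the critical Hall remainder for that piece), and a high-frequency part belonging to $L^2$. The difference $(u_h,B_h,v_h)$ between the given solution and the small one is then estimated in the \emph{energy} space $L^\infty_t(L^2)\cap L^2_t(\dot H^1)$, where everything is subcritical: the worst Hall contribution $\bigl((\nabla\times v_h)\times B_\ell\,|\,\nabla\times v_h\bigr)$ vanishes identically, and the remaining terms close by Gronwall under the hypothesis $(u,B,J)\in L^4_t(\dot H^1)$. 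Interpolation then gives $(u_h,B_h,v_h)\in L^4(\R_+;\dot H^{1/2})$, hence a time $t_0$ at which the full solution is small in $\dot H^{1/2}$, and Theorem \ref{Th_1} applied from $t_0$ onward yields both the regularity and the decay \eqref{eq:decay}. If you wish to salvage your strategy, you must incorporate some version of this two-tier (small critical part plus finite-energy part) decomposition.
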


The next theorem states a weak-strong uniqueness  property of the solution. It is valid
for all positive coefficients $\mu,$ $\nu$ and $\eps.$
\begin{theorem}\label{Th_3}
Consider initial data  $(u_{0}, B_{0})$ in $L^2(\R^3)$ with $\div u_0=\div B_0=0.$
Let $(u, B)$ be any Leray-Hopf solution of the Hall-MHD system associated with initial data $(u_{0}, B_{0}).$ Assume in addition that $u$ and $J:=\nabla\times B$ are in $L^4(0, T; \dot{H}^{1}(\mathbb{R}^3))$ for some time $T>0.$ Then, all Leray-Hopf solutions associated with $(u_0, B_0)$ coincide with $(u, B)$ on the time interval $[0, T].$
\end{theorem}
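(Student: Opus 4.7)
The plan is to apply the classical Leray-Prodi-Serrin weak-strong uniqueness strategy, adapted to accommodate the Hall term. Let $(u_1,B_1)$ be the Leray-Hopf solution meeting the extra integrability $u_1,J_1\in L^4(0,T;\dot H^1(\R^3))$ and let $(u_2,B_2)$ be an arbitrary Leray-Hopf solution sharing the initial data $(u_0,B_0)$. Set $\du:=u_1-u_2,$ $\dB:=B_1-B_2$ and $\dJ:=\nabla\times\dB.$ Since $u_1\in L^4_TL^6_x$ fulfils the Ladyzhenskaya-Prodi-Serrin condition $2/p+3/q=1,$ a standard mollification argument shows that $(u_1,B_1)$ satisfies the energy equality and can play the role of a legitimate test function in the weak formulation of the $(u_2,B_2)$ system, and symmetrically. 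Combining this with the energy inequality of $(u_2,B_2)$ and computing $\frac{d}{dt}\bigl((u_1\,|\,u_2)+(B_1\,|\,B_2)\bigr)$ yields
\begin{equation*}
\tfrac12\tfrac{d}{dt}\bigl(\|\du\|_{L^2}^2+\|\dB\|_{L^2}^2\bigr)+\mu\|\nabla\du\|_{L^2}^2+\nu\|\nabla\dB\|_{L^2}^2\leq\mathcal N(t),
\end{equation*}
where $\mathcal N$ collects the nonlinear residuals.

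The core of the argument lies in two structural cancellations. The first concerns the Hall term: writing $J_1\times B_1-J_2\times B_2=J_1\times\dB+\dJ\times B_2$ and applying $(\nabla\times w\,|\,v)=(w\,|\,\nabla\times v),$ the difference reduces to $-\eps(J_1\times\dB\,|\,\dJ)-\eps(\dJ\times B_2\,|\,\dJ),$ and the last term vanishes thanks to $(a\times b)\cdot a=0.$ This is the discrete analogue of \eqref{1.1100} at the level of the difference and removes the quasilinear obstruction. The second cancellation is the usual MHD one: after substituting $B_2=B_1-\dB,$ the identity $(B_1\cdot\nabla\dB\,|\,\du)+(B_1\cdot\nabla\du\,|\,\dB)=0$ (via $\div B_1=0$) leaves a cubic residual $-(\dB\cdot\nabla\dB\,|\,\du),$ which then cancels against $-(\dB\cdot\nabla\du\,|\,\dB)$ coming, through $u_2=u_1-\du,$ from the term $(\dB\cdot\nabla u_2\,|\,\dB).$ After these reductions every surviving piece of $\mathcal N$ involves at least one factor from the regular triple $(u_1,B_1,J_1).$

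Each remaining term is then handled by Hölder, the Sobolev embedding $\dot H^1(\R^3)\hookrightarrow L^6(\R^3),$ and the Gagliardo-Nirenberg interpolation $\|f\|_{L^3}\lesssim\|f\|_{L^2}^{1/2}\|\nabla f\|_{L^2}^{1/2}.$ Bounds of the form $\|w_1\|_{L^6}\|\du\|_{L^2}^{1/2}\|\nabla\du\|_{L^2}^{3/2}$ combined with Young's inequality lead to
\begin{equation*}
\mathcal N(t)\leq\tfrac\mu2\|\nabla\du\|_{L^2}^2+\tfrac\nu2\|\nabla\dB\|_{L^2}^2+f(t)\bigl(\|\du\|_{L^2}^2+\|\dB\|_{L^2}^2\bigr)
\end{equation*}
with $f\in L^1(0,T),$ the integrability relying on $u_1,J_1\in L^4_T\dot H^1$ and on the Leray-Hopf bound $B_1\in L^2_T\dot H^1.$ Gronwall's lemma together with $\du(0)=\dB(0)=0$ then forces $(\du,\dB)\equiv 0$ on $[0,T],$ which is the claim.

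The main obstacle will be the rigorous justification of the energy identity and of the cross-term computation $\frac{d}{dt}\bigl((u_1\,|\,u_2)+(B_1\,|\,B_2)\bigr);$ since $(u_2,B_2)$ has only Leray-Hopf regularity, one has to mollify $(u_1,B_1)$ in space, use the mollification as a test function, and pass to the limit exploiting the Prodi-Serrin-type integrability of $(u_1,J_1).$ The Hall-MHD specific subtlety is that the weak field $B_2$ appears inside the quasilinear term $\eps\nabla\times(\dJ\times B_2);$ it is the identity $(\dJ\times B_2\,|\,\dJ)=0$—and not any regularity hypothesis on $B_2$—that saves the day and explains why the conclusion holds for all positive $\mu,\nu$ and $\eps,$ without any structural restriction such as $\mu=\nu.$
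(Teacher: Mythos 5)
Your proposal is correct and follows essentially the same route as the paper: energy inequality for both Leray--Hopf solutions, rigorous computation of the cross terms by testing each weak formulation against smooth approximations of the other solution, the cancellation $(\delta J\times B_2)\cdot\delta J=0$ killing the quasilinear part of the Hall term, and a Gronwall argument on the surviving residuals; indeed your Hall residual $-\eps(J_1\times\delta B\,|\,\delta J)$ is, via $(a\times b)\cdot c=(c\times a)\cdot b$, exactly the paper's term $\eps(\nabla\times(\delta J\times\delta B)\,|\,B)$. One small imprecision: the Gronwall weight coming from the terms $(\delta B\cdot\nabla B_1\,|\,\delta u)$ and $(\delta u\cdot\nabla B_1\,|\,\delta B)$ requires $\|B_1\|_{\dot H^1}^4\in L^1(0,T)$, i.e.\ $B_1\in L^4(0,T;\dot H^1)$, which does \emph{not} follow from the Leray--Hopf bound $B_1\in L^2(0,T;\dot H^1)$ alone but does follow by interpolation from $J_1\in L^4(0,T;\dot H^1)$ together with $B_1\in L^\infty(0,T;L^2)$, as the paper notes.
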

Our last result states the existence of global strong solutions for the  $2\frac12$D Hall-MHD system.
Two cases are considered : either the data are small and have just critical regularity, 
or the velocity field is more regular and only the magnetic field 
has to be small accordingly.
\begin{theorem}\label{Th_4}
Assume that $\mu=\nu.$ Let $(u_0,B_0)$ be divergence free vector-fields 
with $u_0$ in $L^2(\R^2)$ and $B_0$ in $H^1(\R^2).$ 
There exists a constant $c>0$ such that, if 
\begin{equation}\label{smallconUB}
\|u_0\|_{L^2(\R^2)} + \|B_0\|_{L^2(\R^2)} + \|u_0-\eps \wt\nabla\times B_0\|_{L^2(\mathbb{R}^2)} < c\mu,\end{equation}
then there exists a unique global solution $(u,B)$  to the Cauchy problem \eqref{1.1a}-\eqref{1.4a}, with
$(u,B)\in\mathcal{C}_b(\R_+; L^2(\R^2))\cap   L^2(\R_+; \dot H^1(\R^2))$ and 
$$\nabla B \in L^\infty(\R_+; L^2(\R^2))\with \nabla^2 B\in  L^2(\R_+; L^2(\R^2)).$$ 
If both $u_0$ and $B_0$ are in $H^1(\R^2),$ then there exists a constant $C_0$
depending only on the $L^2$ norm of $u_0,$ $\nabla u_0,$ and on $\mu,\eps$ such that if 
\begin{equation}\label{smallconB}
\|B_0\|_{H^1(\mathbb{R}^2)}\leq C_0,
\end{equation}
then there exists a unique global solution $(u,B)$  to \eqref{1.1a}-\eqref{1.4a}, with
$$(u, B)\in\mathcal{C}_b(\R_+; H^1(\R^2))\andf (\nabla u,\nabla B)\in  L^2(\R_+; {H}^1(\R^2)).$$ 
Moreover, in the two cases, \eqref{energyineq} becomes an equality.
\end{theorem}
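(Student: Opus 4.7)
The plan is to treat the two parts of the theorem separately, with the key auxiliary variable $E := \eps\omega + B$ (where $\omega := \wt\nabla\times u$) playing in $2\frac12$D the role analogous to $v := u - \eps J$ in $3$D: it cancels the top-order Hall contribution when $\mu=\nu$, via the identity \eqref{eq:E}.

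For the small-data statement, I would mimic the argument that yields Theorem~\ref{Th_1}, but in the 2D functional framework. Since the natural scaling in two space dimensions places $u_0$ and $B_0$ at the level of $L^2$ and $v_0 := u_0-\eps\wt\nabla\times B_0$ at the level of $L^2$ as well, I would work on the extended system \eqref{1.100} and perform an $L^2$ energy estimate on $(u,B,v)$ simultaneously; the only genuinely dangerous nonlinear term $\wt\nabla\times((\wt\nabla\times v)\times B)$ is killed by the skew-symmetry~\eqref{1.1100}. The remaining nonlinearities are handled with the 2D Ladyzhenskaya inequality $\|f\|_{L^4}^4\lesssim\|f\|_{L^2}^2\|\wt\nabla f\|_{L^2}^2$, which turns every cubic nonlinear estimate into something of the form $C\mu^{-1}\|(u,B,v)\|_{L^2}^2\|(\wt\nabla u,\wt\nabla B,\wt\nabla v)\|_{L^2}^2$. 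Under \eqref{smallconUB}, this closes into an a priori bound and a Friedrichs approximation / passing-to-the-limit / energy-method uniqueness argument yields the global solution and the energy identity.

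For the second statement, I would start from any Leray--Hopf solution given by \eqref{energyineq} and then upgrade its regularity in two stages. \emph{Stage~1} is an $L^2$ energy estimate on \eqref{eq:E}: the transport term vanishes thanks to $\wt\div\,\wt u=0$, and the stretching term is bounded, via Hölder and Ladyzhenskaya, by
\begin{equation*}
\Bigl|\int_{\R^2}(\wt E\cdot\wt\nabla u)\cdot E\,dx\Bigr|
\le C\|E\|_{L^2}\|\wt\nabla E\|_{L^2}\|\wt\nabla u\|_{L^2}
\le \tfrac{\mu}{2}\|\wt\nabla E\|_{L^2}^2+\tfrac{C}{\mu}\|\wt\nabla u\|_{L^2}^2\|E\|_{L^2}^2.
\end{equation*}
Since $\int_0^\infty\|\wt\nabla u\|_{L^2}^2\,d\tau\lesssim\|u_0\|_{L^2}^2/\mu$ by \eqref{energyineq}, a Grönwall argument delivers \emph{unconditional} global control
\begin{equation*}
\|E\|_{L^\infty(L^2)}^2+\mu\|\wt\nabla E\|_{L^2(L^2)}^2
\le\|E_0\|_{L^2}^2\,\exp\!\Bigl(C\|u_0\|_{L^2}^2/\mu^2\Bigr),
\end{equation*}
with $\|E_0\|_{L^2}\lesssim\eps\|\wt\nabla u_0\|_{L^2}+\|B_0\|_{L^2}$. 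Combined with the $L^2$ energy bound on $B$, this gives $\omega=(E-B)/\eps$, and hence $\wt\nabla u$, in $L^\infty(L^2)\cap L^2(\dot H^1)$.

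\emph{Stage~2} is an $\dot H^1$ estimate on $B$, obtained by testing \eqref{1.3a} against $-\wt\Delta B$: the transport and stretching terms are controlled by Stage~1, while the Hall pieces $\eps(\wt B\cdot\wt\nabla j-\wt j\cdot\wt\nabla B)$ form the \textbf{main obstacle}. After integration by parts exploiting $\wt\div\,\wt B=0$, 2D Gagliardo--Nirenberg interpolation yields estimates of schematic form $C\eps\|\wt\nabla B\|_{L^2}\|\wt\Delta B\|_{L^2}^2+(\text{lower order})$. The smallness assumption \eqref{smallconB} ensures that this Hall contribution can initially be absorbed into $\nu\|\wt\Delta B\|_{L^2}^2$; a standard continuity (bootstrap) argument then shows that $\|\wt\nabla B(t)\|_{L^2}$ remains small for all $t$, so the absorption is sustained globally. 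This yields $B\in L^\infty(H^1)\cap L^2(H^2)$ and in particular $j\in L^2(\R_+;\dot H^1)\hookrightarrow L^2(\R_+;\mathrm{BMO})$. Invoking the blow-up criterion of \cite{Ch14} promotes the Leray--Hopf solution to a global strong one, after which uniqueness and the equality in \eqref{energyineq} follow from Theorem~\ref{Th_3} and the regained regularity. The critical difficulty throughout is clearly Stage~2: the Hall term has, at first glance, one too many derivatives, and only a careful use of the 2D cancellations combined with the smallness of $B_0$ in $H^1$ can close the estimate.
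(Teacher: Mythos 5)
Your proposal is correct and follows essentially the same route as the paper: the $L^2$ energy estimate on the extended unknown $(u,B,v)$ with the cancellation \eqref{1.1100} and Ladyzhenskaya's inequality for the small-data part, and, for the large-velocity part, the unconditional $L^2\cap L^2(\dot H^1)$ control of $E=\eps\omega+B$ via \eqref{eq:E} feeding into an $H^1$ estimate on $B$ whose Hall contribution $C\eps\|\wt\nabla B\|_{L^2}\|\wt\Delta B\|_{L^2}^2$ is absorbed by a continuity argument under \eqref{smallconB}. The only (immaterial) difference is one of presentation: you upgrade a Leray--Hopf solution through the blow-up criterion of \cite{Ch14}, whereas the paper derives the same a priori bounds on smooth approximate solutions and passes to the limit.
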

The next section is devoted to the proof of Theorem \ref{Th_1}  and of its two corollaries. 
In section  \ref{s:weakstrong}, we establish the weak-strong uniqueness result. 
Section \ref{s:21/2} is dedicated to the proof of  Theorem \ref{Th_4}. 
A few definitions and technical results are recalled in Appendix.


\section{Small data global existence in critical Sobolev spaces}\label{s:trois}

Throughout this section, we shall 
assume for simplicity that 
$\mu=\nu=\eps=1$ (the general case $\mu=\nu>0$ and $\eps>0$ may be 
deduced after suitable rescaling). 
  We shall use repeatedly  the fact that, 
as $B$ is divergence free, one has the following
equivalence of norms for any $s\in\mathbb{R}$: 
\begin{equation}\label{eq:equivnorm}
\|\nabla B\|_{\dot{H}^{s}}\sim \|J\|_{\dot{H}^{s}},\end{equation}
and also that we have $B=\curl^{-1}(u-v),$
where the $-1$-th order homogeneous Fourier multiplier ${\rm{curl}}^{-1}$ is defined  on 
the Fourier side by 
\begin{equation}\label{eq:curl-1}\mathcal{F}({\rm{curl}}^{-1}J)(\xi):=\frac{i\xi\times\widehat{J}}{|\xi|^2}\cdotp\end{equation}

Proving the existence  part of  Theorem \ref{Th_1} is based on the following result~:
\begin{proposition}\label{p:sob}
Let $(u, B)$ be a smooth solution of  the 3D Hall-MHD system with $\eps=\mu=\nu=1,$ on the
time interval $[0,T].$  Let $v:=u-\nabla\times B.$  There exists a universal constant $C$ such that
on $[0,T],$ we have
\begin{multline}
\frac{d}{dt}(\|u\|_{\dot{H}^\frac{1}{2}}^2+\|B\|_{\dot{H}^\frac{1}{2}}^2+\|v\|_{\dot{H}^\frac{1}{2}}^2)+(\|u\|_{\dot{H}^\frac{3}{2}}^2+\|B\|_{\dot{H}^\frac{3}{2}}^2+\|v\|_{\dot{H}^\frac{3}{2}}^2)\\
\leq C\sqrt{\|u\|_{\dot{H}^\frac{1}{2}}^2+\|B\|_{\dot{H}^\frac{1}{2}}^2
+\|v\|_{\dot{H}^\frac{1}{2}}^2}\:(\|u\|_{\dot{H}^\frac{3}{2}}^2+\|B\|_{\dot{H}^\frac{3}{2}}^2+\|v\|_{\dot{H}^\frac{3}{2}}^2)\label{6.100}.
\end{multline}
\end{proposition}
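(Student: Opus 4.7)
The plan is to carry out an $\dot H^{1/2}$ energy estimate on the extended system \eqref{1.100}, which by rescaling reduces to the case $\mu=\nu=\eps=1$. I apply the fractional Laplacian $\Lambda^{1/2}:=(-\Delta)^{1/4}$ to each of the three evolution equations of \eqref{1.100}, take the $L^2(\R^3)$ scalar product with $\Lambda^{1/2}u$, $\Lambda^{1/2}B$, $\Lambda^{1/2}v$ respectively, and sum the three identities. The divergence-free constraints $\div u=\div v=0$ annihilate the pressure terms, while the Laplacians furnish, after integration by parts, the dissipation $\|u\|_{\dot H^{3/2}}^2+\|B\|_{\dot H^{3/2}}^2+\|v\|_{\dot H^{3/2}}^2$ on the left-hand side of \eqref{6.100}. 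Setting $E_s:=\|u\|_{\dot H^s}^2+\|B\|_{\dot H^s}^2+\|v\|_{\dot H^s}^2$, the task reduces to dominating every nonlinear contribution by $C\sqrt{E_{1/2}}\,E_{3/2}$.

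The Navier--Stokes-type bilinear terms $u\cdot\nabla u$ and $B\cdot\nabla B$ in the $u$- and $v$-equations are handled exactly as in the classical Fujita--Kato argument. Transferring one half-derivative onto the test function, one bounds for instance
\[
\Bigl|\int u\cdot\nabla u\cdot\Lambda u\,dx\Bigr|
\lesssim \|u\|_{L^6}\|\nabla u\|_{L^3}\|\Lambda u\|_{L^2}
\lesssim \|u\|_{\dot H^1}^{2}\|u\|_{\dot H^{3/2}}
\lesssim \|u\|_{\dot H^{1/2}}\|u\|_{\dot H^{3/2}}^{2},
\]
using the Sobolev embeddings $\dot H^{1/2}(\R^3)\hookrightarrow L^3$ and $\dot H^1(\R^3)\hookrightarrow L^6$ combined with the interpolation $\|w\|_{\dot H^1}^2\leq \|w\|_{\dot H^{1/2}}\|w\|_{\dot H^{3/2}}$. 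The MHD-type nonlinearities $\nabla\times(v\times B)$, $\nabla\times(v\times u)$ and $\nabla\times(v\cdot\nabla B)$ occurring in the equations for $B$ and $v$, after expansion of the curl by means of $\div u=\div v=\div B=0$, fall in the same cubic class and obey identical bounds.

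The main obstacle is the top-order Hall term $-\nabla\times\bigl((\nabla\times v)\times B\bigr)$ of the $v$-equation: a direct product law would cost the unavailable $\dot H^{5/2}$ norm of $v$. The cure is to reproduce at the $\dot H^{1/2}$ level the pointwise cancellation $(a\times B)\cdot a=0$ that underlies the clean $L^2$ identity \eqref{1.1100}. Since $\Lambda^{1/2}$ commutes with $\nabla\times$, integrating the curl by parts, setting $w:=\nabla\times v$, and subtracting the vanishing ``diagonal'' piece $\int(\Lambda^{1/2}w\times B)\cdot\Lambda^{1/2}w\,dx=0$ yield
\begin{align*}
\int \Lambda^{1/2}\nabla\times\bigl(w\times B\bigr)\cdot\Lambda^{1/2} v\,dx
&=\int \Lambda^{1/2}(w\times B)\cdot\Lambda^{1/2} w\,dx\\
&=\int \bigl(\Lambda^{1/2}(w\times B)-\Lambda^{1/2}w\times B\bigr)\cdot\Lambda^{1/2} w\,dx,
\end{align*}
which is a pure commutator. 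I would then invoke a Kato--Ponce-style estimate of the form $\|\Lambda^{1/2}(fg)-f\Lambda^{1/2}g\|_{L^2}\lesssim \|f\|_{\dot H^{3/2}}\|g\|_{\dot H^{-1/2}}$ (proved via Bony's paraproduct decomposition and recorded in the appendix), together with the obvious $\|\nabla\times v\|_{\dot H^{-1/2}}\lesssim \|v\|_{\dot H^{1/2}}$ and Young's inequality $\|B\|_{\dot H^{3/2}}\|v\|_{\dot H^{3/2}}\leq \tfrac{1}{2}E_{3/2}$, to bound the commutator by $\|B\|_{\dot H^{3/2}}\|v\|_{\dot H^{1/2}}\|v\|_{\dot H^{3/2}}\leq \sqrt{E_{1/2}}\,E_{3/2}$. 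Summing all contributions yields \eqref{6.100}.
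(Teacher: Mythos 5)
Your overall skeleton --- the $\dot H^{1/2}$ energy estimate on the extended system, and in particular the reduction of the Hall term to the pure commutator $\int\bigl(\Lambda^{1/2}(w\times B)-(\Lambda^{1/2}w)\times B\bigr)\cdot\Lambda^{1/2}w\,dx$ with $w=\nabla\times v$ by subtracting the vanishing diagonal piece --- is exactly the paper's (this is the term $A_6$ there). The gap is in how you close that commutator. The estimate you invoke, $\|\Lambda^{1/2}(fg)-f\Lambda^{1/2}g\|_{L^2}\lesssim \|f\|_{\dot H^{3/2}}\|g\|_{\dot H^{-1/2}}$, is false: replacing $(f,g)$ by $(f(\lambda\cdot),g(\lambda\cdot))$ multiplies the left-hand side by $\lambda^{-1}$ and the right-hand side by $\lambda^{-2}$, so letting $\lambda\to\infty$ disproves it; and it is not what the appendix records --- Lemma \ref{Le_28} gives $\|\Lambda^{1/2}(fg)-f\Lambda^{1/2}g\|_{L^2}\lesssim \|\nabla f\|_{L^{p_1}}\|\Lambda^{-1/2}g\|_{L^{p_2}}+\|\Lambda^{1/2}f\|_{L^{p_3}}\|g\|_{L^{p_4}}$, which with $f=B$, $g=\nabla\times v$ and Sobolev embedding yields $\|\nabla B\|_{\dot H^1}\|v\|_{\dot H^1}+\|\nabla B\|_{\dot H^{1/2}}\|v\|_{\dot H^{3/2}}$, not $\|B\|_{\dot H^{3/2}}\|v\|_{\dot H^{1/2}}$.

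The missing idea is structural, not technical: the Hall term carries one more derivative than the Navier--Stokes-type terms, so no bound of the commutator contribution by $C\sqrt{E_{1/2}}\,E_{3/2}$ can hold if $B$ is only measured in $\dot H^{1/2}\cap\dot H^{3/2}$ --- a dimensional count (with $u,B,v\sim \ell^{-1}$, so that $\dot H^{1/2}$ norms are dimensionless and $\dot H^{3/2}$ norms scale like $\ell^{-1}$) shows the commutator term scales like $\ell^{-3}$ while $\sqrt{E_{1/2}}\,E_{3/2}$ scales like $\ell^{-2}$. The indispensable ingredient, which your proposal never uses, is the identity $\nabla\times B=u-v$ together with the norm equivalence \eqref{eq:equivnorm}: it lets one trade $\|\nabla B\|_{\dot H^1}$ and $\|\nabla B\|_{\dot H^{1/2}}$ for $\|u-v\|_{\dot H^1}$ and $\|u-v\|_{\dot H^{1/2}}$, after which the interpolation $\|z\|_{\dot H^1}^2\leq\|z\|_{\dot H^{1/2}}\|z\|_{\dot H^{3/2}}$ produces exactly the right-hand side of \eqref{6.100}. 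The same issue affects $\nabla\times(v\cdot\nabla B)$ (the paper's $A_8$), which you lump into the ``same cubic class'' as the quadratic transport terms but which likewise generates $\|\nabla B\|_{\dot H^1}$ and requires \eqref{eq:equivnorm} to close.
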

\begin{proof} Applying  operator $\Lambda^{\frac{1}{2}}$ to both sides of System \eqref{1.100} 
and taking the $L^2$ scalar product with $\Lambda^{\frac{1}{2}}u$, $\Lambda^{\frac{1}{2}}B$ 
and $\Lambda^{\frac{1}{2}}v,$ respectively, we get
$$\begin{aligned}
\frac{1}{2}\frac{d}{dt}\|u\|_{\dot{H}^\frac{1}{2}}^2+\|u\|_{\dot{H}^\frac{3}{2}}^2&=(\Lambda^{\frac{1}{2}}(u\otimes u)\,|\,\nabla\Lambda^{\frac{1}{2}}u)-(\Lambda^{\frac{1}{2}}(B\otimes B)\,|\, \nabla\Lambda^{\frac{1}{2}}u)=A_1+A_2,\\
\frac{1}{2}\frac{d}{dt}\|B\|_{\dot{H}^\frac{1}{2}}^2+\|B\|_{\dot{H}^\frac{3}{2}}^2&=(\Lambda^{\frac{1}{2}}(v\times B)\,|\, \nabla\times\Lambda^{\frac{1}{2}}B)=A_3,\\
\frac{1}{2}\frac{d}{dt}\|v\|_{\dot{H}^\frac{1}{2}}^2+\|v\|_{\dot{H}^\frac{3}{2}}^2&=A_4+A_5+\cdots+A_8,
\end{aligned}$$
where
\begin{align*}
&A_4:=(\Lambda^{\frac{1}{2}}(u\otimes u)\,|\, \nabla\Lambda^{\frac{1}{2}}v),\\
&A_5:=-(\Lambda^{\frac{1}{2}}(B\otimes B)\,|\, \nabla\Lambda^{\frac{1}{2}}v),\\
&A_6:=-(\Lambda^{\frac{1}{2}}((\nabla\times v)\times B)-(\La^\frac12\nabla\times v)\times B\,|\, \Lambda^{\frac{1}{2}}\nabla\times v),\\
&A_7:=(\Lambda^{\frac{1}{2}}(v\times u)\,|\, \nabla\times\Lambda^{\frac{1}{2}}v),\\
&A_8:=2(\Lambda^{\frac{1}{2}}(v\cdot\nabla B)\,|\, \nabla\times\Lambda^{\frac{1}{2}}v).
\end{align*}
By Lemma \ref{Le_28} and  Sobolev embedding \eqref{em}, we get
\begin{align*}
|A_1|&\leq C \|\Lambda^{\frac{1}{2}}u\|_{L^3}\|u\|_{L^6}\|\nabla\Lambda^{\frac{1}{2}}u\|_{L^2}\\
&\leq C\|u\|_{\dot{H}^1}^2\|u\|_{\dot{H}^\frac{3}{2}},\\
|A_6|&\leq C(\|\nabla B\|_{L^6}\|\Lambda^{\frac{1}{2}}v\|_{L^3}+\|\Lambda^{\frac{1}{2}}B\|_{L^6}\|\nabla\times v\|_{L^3})\|v\|_{\dot{H}^\frac{3}{2}}\\
&\leq C \bigl(\|\nabla B\|_{\dot H^1}\|v\|_{\dot H^1}
+\|\nabla B\|_{\dot H^{\frac12}}\|v\|_{\dot H^{\frac32}}\bigr)\|v\|_{\dot{H}^\frac{3}{2}},\\
|A_8|&\leq C(\|\Lambda^{\frac{1}{2}}v\|_{L^3}\|\nabla B\|_{L^6}+\|v\|_{L^6}\|\nabla\Lambda^{\frac{1}{2}}B\|_{L^3})\|v\|_{\dot{H}^\frac{3}{2}}\\
&\leq C\|\nabla B\|_{\dot{H}^1}\|v\|_{\dot{H}^1}\|v\|_{\dot{H}^\frac{3}{2}}.
\end{align*}
Terms $A_2,$ $A_3,$ $A_4,$ $A_5$ and $A_7$ may be bounded similarly as $A_1$:
$$\begin{aligned}
|A_2|&\leq  C\|B\|_{\dot{H}^1}^2\|u\|_{\dot{H}^\frac{3}{2}},\\
|A_3|&\leq C\|v\|_{\dot{H}^1}\|B\|_{\dot{H}^1}\|B\|_{\dot{H}^\frac{3}{2}},\\
|A_4|&\leq C\|u\|_{\dot{H}^1}^2\|v\|_{\dot{H}^\frac{3}{2}},\\
|A_5|&\leq C\|B\|_{\dot{H}^1}^2\|v\|_{\dot{H}^\frac{3}{2}},\\
|A_7|&\leq C\|v\|_{\dot{H}^1}\|u\|_{\dot{H}^1}\|v\|_{\dot{H}^\frac{3}{2}}.
\end{aligned}$$
Hence, using repeatedly the fact that 
$$
\|z\|_{\dot H^1}\leq \sqrt{\|z\|_{\dot H^{\frac12}} \|z\|_{\dot H^{\frac32}}}
$$
and Young inequality and, sometimes, \eqref{eq:equivnorm}, it is easy to deduce \eqref{6.100}
from the above  inequalities.
\end{proof}

Now, combining Proposition \ref{p:sob} with  Lemma \ref{Le_7.000} (take $\alpha=1$, $W\equiv0$) implies that 
there exists a constant $c>0$ such that    if 
\begin{equation}\label{eq:small}\|u_0\|_{\dot{H}^\frac{1}{2}}+\|B_0\|_{\dot{H}^\frac{1}{2}}+\|v_0\|_{\dot{H}^\frac{1}{2}}< c,\end{equation}
then we have for all time $t\geq 0,$
\begin{equation}\label{6.888}
\|(u,B,v)(t)\|_{\dot{H}^\frac{1}{2}}^2+\frac12\int_0^t \|(u,B,v)\|_{\dot{H}^\frac{3}{2}}^2\,d\tau
\leq \|(u,B,v)(0)\|_{\dot{H}^\frac{1}{2}}^2.
\end{equation}
That inequality obviously implies that Condition \eqref{eq:small} is 
satisfied for all time $t_0.$ 
Hence, repeating the argument, we get  \eqref{2.888} for all $t\geq t_0\geq0.$ 
\medbreak
In order to prove rigorously 
the existence part of Theorem \ref{Th_1}, one can resort
to  the following classical procedure:
\begin{enumerate}
\item   smooth out the initial data and get a sequence  $(u^n, B^n)_{n\in\N}$
 of smooth solutions to Hall-MHD system on the maximal time interval $[0, T^n)$;
\item  apply \eqref{6.888} to $(u^n, B^n)_{n\in\N}$ and prove that $T^n=\infty$ and   that  
the sequence $(u^n, B^n, v^n)_{n\in\N}$ with $v^n:=u^n-\nabla\times B^n$
is bounded in  $L^\infty(\R_+; \dot{H}^\frac12)\cap L^2(\R_+; \dot{H}^\frac32)$;
\item  use compactness to prove that  $(u^n, B^n)_{n\in\N}$ converges, up to extraction, 
to a solution of Hall-MHD
system supplemented with initial data $(u_0,B_0)$;
\item prove stability estimates in $L^2$ to get the uniqueness of the solution.
\end{enumerate}
\medbreak
To proceed, let us  smooth out the initial data as follows:
\begin{equation*}
u^n_0 :=\mathcal{F}^{-1}(\mathbf{1}_{\cC_n}\widehat{u_0})\andf  B^n_0 :=\mathcal{F}^{-1}(\mathbf{1}_{\cC_n}\widehat{B_0}),
\end{equation*}
where $\cC_n$ stands for the annulus 
with small radius $n^{-1}$ and large radius $n.$ 
Clearly, $u^n_0$ and $B^n_0$ belong to all Sobolev spaces, and
\begin{equation}\label{smoothdata}
\|(u^n_0, B^n_0, v^n_0)\|_{\dot{H}^\frac12}\leq \|(u_0, B_0, v_0)\|_{\dot{H}^\frac12}\cdotp
\end{equation} 
The classical well-posedness theory in Sobolev spaces (see e.g. \cite{Ch13})  ensures that 
the Hall-MHD system with data $(u_0^n,B_0^n)$  has a unique maximal solution $(u^n,B^n)$ 
on $[0,T^n)$ for some $T^n>0,$  belonging to $\cC([0, T]; H^m)\cap L^2(0, T; H^{m+1})$ for all $T<T^n.$ 
Since the solution is smooth, we have according to \eqref{6.888} and \eqref{smoothdata}, 
$$\|(u^n, B^n, v^n)\|_{L^\infty_{T^n}(\dot{H}^\frac12)}^2+\frac12\|(u^n, B^n, v^n)\|_{L^2_{T^n}(\dot{H}^\frac32)}^2\leq \|(u_0, B_0, v_0)\|_{\dot{H}^\frac12}^2\cdotp$$
\medbreak
Now, using \eqref{eq:equivnorm} and the embedding  $\dot{H}^\frac32(\R^3)\hookrightarrow BMO(\R^3)$, we get
$$\int_0^{T^n} \|(u^n, \nabla B^n)\|^2_{BMO}\,dt
\lesssim \|(u^n, v^n)\|_{L^2_{T^n}(\dot{H}^\frac32)}^2.$$
Hence, the continuation criterion of \cite{Ch14} guarantees that  $T^n=+\infty.$
This means that the solution is global and that, furthermore, 
\begin{equation}\label{6.220}
\|(u^n, B^n, v^n)\|_{L^\infty(\dot{H}^\frac12)}^2+\frac12\|(u^n, B^n, v^n)\|_{L^2(\dot{H}^\frac32)}^2\leq \|(u_0, B_0, v_0)\|_{\dot{H}^\frac12}^2\cdotp
\end{equation} 
We claim that, up to  extraction, the sequence $(u^n, B^n)_{n\in\mathbb{N}}$ converges in $\mathcal{D}'(\mathbb{R}_+\times\mathbb{R}^3)$ to a solution $(u, B)$ of \eqref{1.1}--\eqref{1.3} supplemented
with data $(u_0,B_0).$ The definition of $(u^n_0, B^n_0)$ and the fact that $(u_0,B_0,v_0)$ belongs to $\dot H^{\frac12}$  already entails that 
\begin{equation*}
(u^n_0, B^n_0, v^n_0)\to(u_0, B_0, v_0)\quad{\rm{in}}\quad\dot{H}^{\frac12}\cdotp
\end{equation*}
\medbreak
Proving  the convergence of $(u^n, B^n, v^n)_{n\in\N}$ can be achieved 
from  compactness arguments, after  exhibiting  bounds in suitable spaces
for $(\partial_tu^n,\partial_t B^n, \partial_tv^n)_{n\in\N}.$ Then, combining
with compact embedding will enable us to  apply Ascoli's theorem and to get the existence of a limit $(u, B, v)$ for a subsequence. Furthermore, the uniform bound \eqref{6.220}  will  provide us with additional regularity and convergence properties so that we will be able to pass to the limit in 
the Hall-MHD system.
   
   To proceed, let us introduce  $(u_L,B_L,v_L):=e^{t\Delta}(u_0,B_0,v_0),$ 
    $(u^n_L, B^n_L, v^n_L):=\mathcal{F}^{-1}\bigl(\mathbf{1}_{\cC_n}(\wh u_L,\wh B_L,\wh v_L)\bigr)$
    and  $(\wt u^n, \wt B^n, \wt v^n):=(u^n-u^n_L, B^n-B^n_L, v^n-v^n_L).$ 
 
It is clear that $(u^n_L, B^n_L, v^n_L)$ tends to $(u_L,B_L,v_L)$ in $\mathcal{C}(\R_+; \dot{H}^\frac12)\cap L^2(\R_+; \dot{H}^\frac32),$
which implies that $v_L=u_L-\nabla\times B_L,$ since  $v_L^n=u_L^n-\nabla\times B_L^n$
for all $n\in\N.$

Proving the convergence of $(\wt u^n,\wt B^n,\wt v^n)_{n\in\N}$  relies on  the following lemma:
\begin{lemma}\label{Le_6.4} Sequence $(\wt u^n, \wt B^n, \wt v^n)_{n\in\mathbb{N}}$ is   bounded in $\cC^{\frac14}_{loc}(\R_+; \dot{H}^{-1}).$
\end{lemma}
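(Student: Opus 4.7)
The plan is to prove that $\partial_t(\wt u^n, \wt B^n, \wt v^n)$ is uniformly bounded in $L^{4/3}_{loc}(\R_+; \dot H^{-1}).$ Granting this, the claim follows at once from the fundamental theorem of calculus and H\"older's inequality:
$$
\|f(t)-f(s)\|_{\dot H^{-1}} = \Bigl\|\int_s^t\partial_\tau f\,d\tau\Bigr\|_{\dot H^{-1}} \leq |t-s|^{1/4}\|\partial_\tau f\|_{L^{4/3}(s,t;\dot H^{-1})}.
$$

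First I would write down the evolution equations satisfied by $(\wt u^n, \wt B^n, \wt v^n).$ Since $(u^n_L, B^n_L, v^n_L)$ solves the free heat system with the same truncated data, the linear parts cancel and one is left with heat equations carrying zero initial data and nonlinear right-hand sides inherited from the extended system \eqref{1.100} (the pressure gradient being absorbed via the Leray projector, which is bounded on $\dot H^{-1}$). The key input is the uniform bound \eqref{6.220}: $(u^n, B^n, v^n)$ is bounded in $L^\infty(\dot H^{1/2}) \cap L^2(\dot H^{3/2}),$ and by interpolation one also gets uniform bounds in $L^4_t(\dot H^1).$

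Next I would estimate each contribution in $L^{4/3}_{loc}(\dot H^{-1}).$ The diffusion term satisfies $\|\Delta \wt u^n\|_{\dot H^{-1}} = \|\wt u^n\|_{\dot H^1} \in L^4_t,$ uniformly. For the convective quadratic nonlinearities $u^n\cdot\nabla u^n = \div(u^n\otimes u^n),$ $B^n\cdot\nabla B^n,$ $\nabla\times(v^n\times B^n),$ $\nabla\times(v^n\times u^n)$ and $\nabla\times(v^n\cdot\nabla B^n),$ the outer divergence or curl yields a one-derivative gain in $\dot H^{-1},$ and combining the Sobolev embedding $\dot H^{3/4}(\R^3) \hookrightarrow L^4(\R^3)$ with interpolation gives, for instance,
$$
\|u^n\cdot\nabla u^n\|_{\dot H^{-1}} \lesssim \|u^n\|_{L^4}^2 \lesssim \|u^n\|_{\dot H^{1/2}}^{3/2}\|u^n\|_{\dot H^{3/2}}^{1/2},
$$
which integrates to $L^{4/3}_{loc}(\dot H^{-1})$ by H\"older. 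For the Hall contribution $\nabla\times((\nabla\times v^n)\times B^n),$ the outer curl again offers one derivative of slack, and H\"older with $\|\nabla v^n\|_{L^3}\lesssim \|v^n\|_{\dot H^{3/2}}$ and $\|B^n\|_{L^6}\lesssim\|B^n\|_{\dot H^1}$ yields
$$
\|\nabla\times((\nabla\times v^n)\times B^n)\|_{\dot H^{-1}} \lesssim \|v^n\|_{\dot H^{3/2}}\|B^n\|_{\dot H^1},
$$
which lies in $L^{4/3}_{loc}(\R_+)$ since the first factor is in $L^2_t$ and the second in $L^4_t.$

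The main obstacle is the Hall term, whose high derivative content usually forbids such low-regularity time-derivative estimates. The key observation that resolves the issue is that working in $\dot H^{-1}$ offers exactly one derivative of slack, which absorbs the extra $\nabla$ sitting inside the Hall nonlinearity. Collecting all bounds yields the claimed uniform $L^{4/3}_{loc}(\R_+;\dot H^{-1})$ estimate on $\partial_t(\wt u^n, \wt B^n, \wt v^n),$ whence the H\"older continuity in time follows.
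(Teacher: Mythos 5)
Your proposal is correct and follows essentially the same route as the paper: write the heat equations with zero initial data satisfied by $(\wt u^n,\wt B^n,\wt v^n)$, show via product laws and the uniform bound \eqref{6.220} that the right-hand sides (hence $\partial_t(\wt u^n,\wt B^n,\wt v^n)$) are uniformly bounded in $L^{4/3}_{loc}(\R_+;\dot H^{-1})$, and conclude by H\"older in time. The only difference is cosmetic: for the Hall term you use $\|\nabla\times v^n\|_{L^3}\|B^n\|_{L^6}$ where the paper uses $\|v^n\|_{\dot H^1}\|B^n\|_{L^\infty}$ together with Gagliardo--Nirenberg, both of which land in $L^{4/3}_{loc}$.
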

\begin{proof}
Observe  that $(\wt u^n, \wt B^n, \wt v^n)(0)=(0, 0, 0)$ and that
\begin{equation}\label{AP}
\left\{\begin{aligned}
\partial_t \wt u^n&=\Delta \wt u^n+\mathcal{P}\bigr(\div(B^n\otimes B^n)-\div(u^n\otimes u^n)\bigl),\\
\partial_t \wt B^n&=\Delta \wt B^n+\nabla\times(v^n\times B^n),\\
\partial_t \wt v^n&=\Delta \wt v^n+\mathcal{P}\Bigr(\div(B^n\otimes B^n)-\div(u^n\otimes u^n)\Bigr)
\\&-\nabla\times((\nabla\times v^n)\times B^n)
+\nabla\times(v^n\times u^n)+2\nabla\times(v^n\cdot\nabla B^n).
\end{aligned}
\right.
\end{equation}
Using   the uniform bound \eqref{6.220}, the  product laws:
$$\|a\,b\|_{L^2}\lesssim\|a\|_{\dot{H}^{\frac12}}\|b\|_{\dot{H}^{1}}\andf
\|(\nabla\times a)\times b\|_{L^2}\lesssim\|a\|_{\dot{H}^{1}}\|b\|_{L^\infty},$$
and  Gagliardo-Nirenberg inequality \eqref{eq:GN2} with $s=1$ and $s'=2,$
 we discover
  that the right-hand side of \eqref{AP} is uniformly bounded in $L^{\frac43}_{loc}(\R_+;\dot{H}^{-1}).$ 
Then applying  H\"older
 inequality completes the proof of the lemma. \end{proof}
\medbreak
One can now turn to the proof of the existence of a solution. Let $(\phi_{j})_{j\in\mathbb{N}}$ be a sequence of $\cC^\infty_c(\mathbb{R}^3)$ cut-off functions supported in the ball $B(0, j+1)$ of $\mathbb{R}^3$ and equal to $1$ in a neighborhood of $B(0, j).$

 Lemma \ref{Le_6.4} ensures that $(\wt u^n, \wt B^n, \wt v^n)_{n\in\mathbb{N}}$ is uniformly equicontinuous in the space $\cC([0,T]; \dot{H}^{-1})$ 
 for all $T>0,$ and \eqref{6.220} tells  us that   it is bounded in $L^\infty(\R_+;\dot H^{\frac12}).$ 
  Using the fact that the application $z\mapsto\phi_{j}z$ is compact from $\dot{H}^{\frac12}$ into 
  $\dot{H}^{-1},$  combining  Ascoli's theorem and Cantor's diagonal process enables to conclude that there exists some triplet $(\wt u, \wt B, \wt v)$  such that for all $j\in\mathbb{N},$ 
\begin{equation}\label{6.444}
(\phi_{j}\wt u^n, \phi_{j}\wt B^n, \phi_{j}\wt v^n)\to(\phi_{j}\wt u, \phi_{j}\wt B, \phi_{j}\wt v)\quad {\rm{in}}\quad \cC(\R_+; \dot{H}^{-1}).
\end{equation}
This obviously entails that $(\wt u^n, \wt B^n, \wt v^n)$ tends to $(\wt u, \wt B, \wt v)$ in $\cD'(\mathbb{R}_+\times\mathbb{R}^3),$
which is enough to pass to the limit in all the linear terms of  \eqref{1.100}
and to ensure that $\wt v=\wt u-\nabla\times\wt B,$ and thus $v=u-\nabla\times B.$

{}From the  estimates \eqref{6.220}, interpolation and  classical  functional analysis arguments,
 we  gather that $(\wt u, \wt B, \wt v)$ belongs to $L^\infty(0,T;\dot H^{\frac12})\cap L^2(0,T;\dot H^{\frac32})$ and to  $\cC^{\frac14}([0, T]; \dot{H}^{-1})$ for all $T>0,$ 
 and  better properties of convergence like, for instance,
 \begin{equation}\label{eq:better}
 \phi_j(\wt u^n, \wt B^n, \wt v^n)\to \phi_j(\wt u,\wt B,\wt v)\quad\hbox{in }\ 
 L^2_{loc}(\R_+;\dot H^1)\quad\hbox{for all }\ j\in\N.
 \end{equation} 
 As an example, let us  explain how to  pass to the limit in the term $\nabla\times((\nabla\times v^n)\times B^n).$ 
 Let $\theta\in\cC^\infty_c(\mathbb{R}_+\times\mathbb{R}^3;\R^3)$ and $j\in\mathbb{N}$ be such that ${\rm{Supp}}\,\theta\subset[0,\,j]\times B(0, j).$ 
We write
$$\displaylines{
\langle\nabla\times((\nabla\times v^n)\times B^n),\,\theta\rangle-\langle\nabla\times((\nabla\times v)\times B), \theta\rangle\hfill\cr\hfill
=\langle(\nabla\times v^n)\times \phi_j(B^n-B),\,\nabla\times\theta\rangle
+\langle(\nabla\times\phi_j(v^n-v))\times B,\,\nabla\times\theta\rangle.}$$
Now, we have for all $T>0,$ 
\begin{align*}
\|(\nabla\times v^n)\times \phi_j(B^n-B)\|_{L^1(0,T; L^2)}\lesssim \|\nabla\times v^n\|_{L^2(0,T;\dot{H}^\frac12)}\|\phi_j(B^n-B)\|_{L^2(0,T;\dot{H}^1)},
\end{align*}
\begin{align*}
\|(\nabla\times\phi_j(v^n-v))\times B\|_{L^\frac43(0,T; L^2)}\lesssim\|(\nabla\times\phi_j(v^n-v))\|_{L^2(0,T\times \R^3)}\|B\|_{L^4(0,T;L^\infty)}.
\end{align*}
Thanks to \eqref{6.220} and to \eqref{eq:better}, we  see that the right-hand sides above converge to $0.$
Hence
$$\nabla\times((\nabla\times v^n)\times B^n)\to\langle\nabla\times((\nabla\times v)\times B)\quad\hbox{in }\ \cD'(\R_+\times\R^3).$$
Arguing similarly to pass to the limit in the other nonlinear terms, one may 
conclude that $(u,B,v)$ satisfies the extended formulation \eqref{1.100}. 
Besides, as we know that  $v=u-\nabla\times B,$ the couple  $(u,B)$ satisfies the Hall-MHD system 
for some suitable pressure function $P.$ 
\medbreak
To prove that $(u, B)$ is continuous in $\dot{H}^{\frac12},$ it suffices  to notice that
the properties of regularity of the solution  ensure that $u$ and $B$ satisfy a heat equation with 
initial data in $\dot H^{\frac12}$ and right-hand side in $L^2(\R_+;\dot H^{-\frac12})$
(we do not know how to prove the time continuity  with values  in $\dot{H}^{\frac12}$
for $\nabla B$ or, equivalently, $v,$  though). 
\medbreak
Let us next prove the  uniqueness part of the theorem. Let   $(u_1, B_1)$ and $(u_2, B_2)$ be two solutions 
of the Hall-MHD system on $[0,T]\times\R^3,$ supplemented with the same initial data $(u_0, B_0)$ and such that, denoting $v_i=u_i-\nabla\times B_i$ for  $i=1,2,$ 
$$(u_i,B_i,v_i)\in L^\infty([0,T];\dot H^{\frac12})\andf 
(\nabla u_i,\nabla B_i,\nabla v_i)\in L^2(0,T;\dot H^{\frac12}).$$
In order to prove the result, we shall estimate the difference  $(\du, \dB, \dv) := (u_1-u_2, B_1-B_2, v_1-v_2)$ in  the space $\cC([0,T]; L^2)\cap 
L^2(0,T; \dot H^1).$  
In order to  justify  that, indeed, $(\du, \dB, \dv)$  belongs to that space, 
one can observe that 
\begin{equation}\label{3.d1}
\left\{
\begin{aligned}
 &\partial_t {\du}-\Delta {\du}:=R_1,\\
 &\partial_t {\dB}-\Delta {\dB}:=R_2,\\
 &\partial_t \dv-\Delta\dv:=R_1+R_3+R_4+ R_5,
\end{aligned}
\right.
\end{equation}
where
\begin{align*}
&R_1:=\mathcal{P}(B_1\cdot\nabla\dB+\dB\cdot\nabla B_2-u^1\cdot\nabla\du-\du\cdot\nabla u_2),\\
&R_2:=\nabla\times(v_1\times\dB+\dv\times B_2),\\
&R_3:=-\nabla\times((\nabla\times v_1)\times\dB+(\nabla\times\dv)\times B_2),\\
&R_4:=\nabla\times(v_1\times\du+\dv\times u_2),\\
&R_5:=2\nabla\times(v_1\cdot\nabla\dB+\dv\cdot\nabla B_2).
\end{align*}
Since  $(\du, \dB, \dv)|_{t=0}{=}0,$  in order to achieve our goal, it suffices to prove that 
$R_1$ to $R_5$ belong to the space $L^2(0,T;\dot H^{-1}).$
Now, since  $(\du, \dB, \dv)\in L^\infty(0,T; \dot H^{\frac{1}{2}})\cap L^2(0,T; \dot H^{\frac{3}{2}}),$   we have, by interpolation and H\"older inequality 
that   $(\du, \dB, \dv)\in  L^4(0,T; \dot H^{1}).$
Hence, using repeatedly the fact that the numerical product of functions
may be continuously extended to $\dot H^1\times \dot H^{1/2}\to L^2,$ 
one can write that 
$$\begin{aligned}
\|R_1\|_{L^2(0,T;\dot{H}^{-1})}
&\lesssim\|B_1\otimes\dB\|_{L^2(0,T;L^2)}+\|B_2\otimes\dB\|_{L^2(0,T;L^2)}\\
&\hspace{3.4cm}+\|u_1\otimes\du\|_{L^2(0,T;L^2)}+\|u_2\otimes\du\|_{L^2(0,T;L^2)}\\
&\lesssim T^{\frac14}\|(u_1, u_2, B_1, B_2)\|_{L^{\infty}(0,T;\dot H^{\frac{1}{2}})}\|(\du, \dB)\|_{L^{4}(0,T;\dot H^{1})},\\
\|R_2\|_{L^2(0,T;\dot{H}^{-1})}
&\lesssim\|v^1\times\dB\|_{L^2(0,T;L^2)}+\|\dv\times B_2\|_{L^2(0,T;L^2)}\\
&\lesssim T^{\frac14}\|(v_1, B_2)\|_{L^{\infty}(0,T;\dot H^{\frac{1}{2}})}\|(\dv, \dB)\|_{L^{4}(0,T;\dot H^{1})},
\end{aligned}
$$
$$
\begin{aligned}\|R_3\|_{L^2(0,T;\dot{H}^{-1})}
&\lesssim\|(\nabla\times v_1)\times\dB\|_{L^2(0,T;L^2)}+\|(\nabla\times \dv)\times B_2\|_{L^2(0,T;L^2)}\\
&\lesssim\|v_1\|_{L^2(0,T;\dot H^{\frac32})}\|\dB\|_{L^\infty(0,T;\dot H^{1})}+\|\dv\|_{L^2(0,T;\dot H^{\frac32})}\|B_2\|_{L^\infty(0,T;\dot H^{1})}\\
&\lesssim\|(v_1, \dv)\|_{L^2(0,T;\dot H^{\frac32})}\|(B_2, \dB)\|_{L^\infty(0,T;\dot H^{1})},\end{aligned}$$
Note that our assumptions ensure that $B_i$ and $\nabla B_i$ are in $L^\infty(0,T;\dot H^{\frac12})$
and thus we do have, by interpolation inequality \eqref{i1}, $B_i$ in $L^\infty(0,T; \dot H^{1})$ for $i=1,2.$
Terms $R_4$ and $R_5$ may be treated similarly. 
\medbreak
Estimating   $(\du, \dB, \dv)$ in  the space $L^\infty(0,T; L^2)\cap 
L^2(0,T; \dot H^1)$ follows from  a standard energy method applied on \eqref{3.d1}, H\"older's inequality and Sobolev embedding.
More precisely, we have
$$\begin{aligned}
\frac12\frac{d}{dt}\|\du\|_{L^2}^2\!+\!\|\du\|_{\dot H^1}^2
&\!\lesssim\!(\|B_1\otimes\dB\|_{L^2}\!+\!\|B_2\otimes\dB\|_{L^2}+\|u_2\otimes\du\|_{L^2})\|\nabla\du\|_{L^2}\\
&\!\lesssim\! (\|B_1\|_{\dot{H}^{1}}\|\dB\|_{\dot{H}^{\frac{1}{2}}}\!+\!\|B_2\|_{\dot{H}^{1}}\|\dB\|_{\dot{H}^{\frac12}}\!+\!\|u_2\|_{\dot{H}^{1}}\|\du\|_{\dot{H}^{\frac12}})\|\du\|_{\dot{H}^{1}},
\end{aligned}$$
$$\begin{aligned}
\frac12\frac{d}{dt}\|\dB\|_{L^2}^2\!+\!\|\dB\|_{\dot H^1}^2
\lesssim&(\|v_1\times\dB\|_{L^2}+\|\dv\times B_2\|_{L^2})\|\nabla\dB\|_{L^2}\\
\lesssim&(\|v_1\|_{\dot{H}^{1}}\|\dB\|_{\dot{H}^{\frac{1}{2}}}+\|\dv\|_{\dot{H}^{\frac12}}\|B_2\|_{\dot{H}^{1}})\|\dB\|_{\dot{H}^1}
\end{aligned}$$
and, using \eqref{eq:equivnorm}, 
$$\begin{aligned}
\frac12\frac{d}{dt}\|\dv&\|_{L^2}^2+\|\dv\|_{\dot H^1}^2\\
&\lesssim(\|B_1\otimes\dB\|_{L^2}+\|B_2\otimes\dB\|_{L^2}+\|u_1\otimes\du\|_{L^2}+\|u_2\otimes\du\|_{L^2})\|\nabla\dv\|_{L^2}\\
&\qquad\qquad+\|(\nabla\times v_1)\times\dB\|_{L^2}\|\nabla\times\dv\|_{L^2}\\
&\qquad\qquad+(\|v_1\times\du\|_{L^2}+\|\dv\times u_2\|_{L^2})\|\nabla\times\dv\|_{L^2}\\
&\qquad\qquad+(\|v_1\cdot\nabla\dB\|_{L^2}+\|\dv\cdot\nabla B_2\|_{L^2})\|\nabla\times\dv\|_{L^2}\\
&\lesssim\Bigl(\|B_1\|_{\dot{H}^{1}}\|\dB\|_{\dot{H}^{\frac{1}{2}}}+\|B_2\|_{\dot{H}^{1}}\|\dB\|_{\dot{H}^{\frac12}}+\|u_1\|_{\dot{H}^{1}}\|\du\|_{\dot{H}^{\frac{1}{2}}}+\|u_2\|_{\dot{H}^{1}}\|\du\|_{\dot{H}^{\frac12}}\\
&\qquad\qquad+\|\nabla\!\times\! v_1\|_{\dot{H}^{\frac{1}{2}}}(\|\du\|_{L^2}\!+\!\|\dv\|_{L^2})
\!+\!\|v_1\|_{\dot{H}^1}\|\du\|_{\dot{H}^{\frac{1}{2}}}\!+\!\|\dv\|_{\dot{H}^1}\|u_2\|_{\dot{H}^{\frac{1}{2}}}\\
&\qquad\qquad+\|v_1\|_{\dot{H}^1}(\|\du\|_{\dot{H}^{\frac{1}{2}}}+\|\dv\|_{\dot{H}^{\frac{1}{2}}})+\|\dv\|_{\dot{H}^{\frac12}}\|\nabla B_2\|_{\dot{H}^{1}}\Bigr)\|\dv\|_{\dot{H}^1}.
\end{aligned}$$
At this stage,  interpolation and Young's inequality imply that
$$
\begin{aligned}
\|B_1\|_{\dot{H}^{1}}\|\dB\|_{\dot{H}^{\frac{1}{2}}}\|\du\|_{\dot{H}^{1}}&\lesssim
\|B_1\|_{\dot{H}^{1}}\|\dB\|_{L^2}^{\frac12}\|\dB\|_{\dot{H}^{1}}^{\frac12}\|\du\|_{\dot{H}^{1}}\\
&\leq \frac1{10} \|(\dB,\du)\|_{\dot H^1}^2 +C\|\dB\|_{L^2}^2\|B_1\|_{\dot H^1}^4,
\end{aligned}
$$
and similar inequalities for all the terms  of the right-hand sides of the above inequalities, except for 
the one with $\|\nabla \times v_1\|_{\dot H^{\frac12}}$ that we bound as follows:
$$\|\nabla\times v_1\|_{\dot H^{\frac12}}\|(\du,\dv)\|_{L^2}\|\dv\|_{\dot H^1}
\leq \frac1{10}\|\dv\|_{\dot H^1}^2 + C \|v_1\|_{\dot H^{\frac32}}^2\|(\du,\dv)\|_{L^2}^2.$$
In the end, we get for all $t\in(0, T),$
$$\frac12\frac{d}{dt}\|(\du,\dB,\dv)\|_{L^2}^2+\|(\du,\dB,\dv)\|_{\dot H^1}^2 \lesssim
V(t)\|(\du,\dB,\dv)\|_{L^2}^2 $$
with
\begin{equation*}
V(t):=\|(u_1,u_2, B_1,B_2, v_1,v_2)(t)\|_{\dot{H}^{1}}^4+\|v_1(t)\|_{\dot{H}^{\frac{3}{2}}}^2.
\end{equation*}
Since our assumptions ensure that $V$ is integrable on $[0,T]$ 
and  $(\du,\dB,\dv)(0)=0,$ applying  Gronwall's inequality yields 
$$(\du, \dB, \dv)\equiv0\ \hbox{ in }\ L^\infty(0, T; L^2(\mathbb R^{3})).$$

 Let us finally explain  the propagation of higher Sobolev regularity
if the initial data $(u_0,B_0)$ are, additionally, in $H^s\times H^{s+1}$  for some  $s\geq0.$
Our aim is to prove that the solution $(u,B)$ constructed above is in  
$\cC_b(\R_+;H^s\times H^{s+1}),$ 
and such that $(\nabla u,\nabla B)\in L^2(\R_+;H^s\times H^{s+1}).$ 

For the time being, let us assume that $(u,B)$ is smooth and explain 
how to  perform estimates in Sobolev spaces. First, we multiply \eqref{1.1} and \eqref{1.3} by $u$ and $B$, respectively, integrate and add up the resulting equations.  Using the fact that  
\begin{equation*}
(\nabla\times(J\times B)\,|\,B)=(J\times B\,|\,J)=0,
\end{equation*}
one gets  the following energy balance:
\begin{equation}
\frac{1}{2}\frac{d}{dt}(\|u\|_{L^2}^2+\|B\|_{L^2}^2)+\|\nabla u\|_{L^2}^2+\|\nabla B\|_{L^2}^2=0\label{3.1500}.
\end{equation}
Let $\Lambda^s$ denote the fractional derivative operator defined in the Appendix. Since 
$$\|u\|_{H^{s}}+ \|B\|_{H^{s+1}}\approx \|(u,B)\|_{L^2}+\|(\Lambda^s u,\Lambda^s B,\Lambda^s v)\|_{L^2},$$
in order to prove the desired Sobolev estimates, it suffices
to get a suitable control on $\|\Lambda^s u\|_{L^2}$ and on $\|\Lambda^{s+1}B\|_{L^2}.$
To this end, apply  $\Lambda^s$ to  \eqref{1.1}, then take the $L^2$ scalar product with $\Lambda^s u.$ We get: 
$$\displaylines{\quad
\frac{1}{2}\frac{d}{dt}\|\Lambda^s u\|_{L^2}^2+\|\Lambda^s\nabla u\|_{L^2}^2=(\Lambda^s(B\otimes B)\,|\,\Lambda^s \nabla u)-(\Lambda^s(u\otimes u)\,|\, \Lambda^s \nabla u)=:E_1+E_2.
}$$
In order to control $\|\Lambda^{s+1}B\|_{L^2},$  one has to use the cancellation property \eqref{1.1100}.
Then, applying  $\Lambda^s$ to the second and third equation of \eqref{1.100} and taking the  $L^2$ scalar product with $\Lambda^s B,~\Lambda^s v,$ respectively,  yields:
$$\displaylines{\quad
\frac{1}{2}\frac{d}{dt}\|\Lambda^s B\|_{L^2}^2+\|\Lambda^s\nabla B\|_{L^2}^2=(\Lambda^s(v\times B)\,|\,\Lambda^s \nabla\times B)=:E_3,
}$$
$$\displaylines{\quad
\frac{1}{2}\frac{d}{dt}\|\Lambda^s v\|_{L^2}^2+\|\Lambda^s\nabla v\|_{L^2}^2=(\Lambda^s(B\otimes B)\,|\, \Lambda^s \nabla v)-(\Lambda^s(u\otimes u)\,|\, \Lambda^s\nabla v)\hfill\cr\hfill
\hspace{2cm}-(\Lambda^s((\nabla\times v)\times B)-(\Lambda^s\nabla\times v)\times B\,|\,\Lambda^s\nabla\times v)+(\Lambda^s(v\times u)\,|\, \Lambda^s\nabla\times v)\hfill\cr\hfill+2(\Lambda^s(v\cdot\nabla B)\,|\,\Lambda^s\nabla\times v)
=:E_4+E_5+\cdots+E_8.
}$$
 Sobolev embedding, Young's inequality and Lemma \ref{Le_28}, imply that
$$\begin{aligned}
|E_1|&\lesssim\|\Lambda^s B\|_{L^6}\|B\|_{L^3}\|\Lambda^s\nabla u\|_{L^2}\\
&\lesssim\|B\|_{\dot{H}^{\frac12}}(\|\Lambda^s\nabla B\|_{L^2}^2+\|\Lambda^s\nabla u\|_{L^2}^2),\\
|E_2|
&\lesssim\|u\|_{\dot{H}^{\frac12}}\|\Lambda^s\nabla u\|_{L^2}^2,\\
|E_3|&\lesssim(\|\Lambda^s v\|_{L^6}\|B\|_{L^3}+\|\Lambda^s B\|_{L^6}\|v\|_{L^3})\|\Lambda^s\nabla\times B\|_{L^2}\\
&\lesssim\|(B, v)\|_{\dot{H}^{\frac12}}(\|\Lambda^s\nabla B\|_{L^2}^2+\|\Lambda^s\nabla v\|_{L^2}^2),\\
|E_4|
&\lesssim\|B\|_{\dot{H}^{\frac12}}(\|\La^s \nabla B\|_{L^2}^2+\|\La^s \nabla v\|_{L^2}^2),\\
|E_5|
&\lesssim\|u\|_{\dot{H}^{\frac12}}(\|\La^s \nabla u\|_{L^2}^2+\|\La^s \nabla v\|_{L^2}^2),\\
|E_6|&\lesssim(\|\nabla B\|_{L^3}\|\La^{s-1}\nabla\times v\|_{L^6}+\|\La^s B\|_{L^6}\|\nabla\times v\|_{L^3})\|\La^s \nabla v\|_{L^2}\\
&\leq (C\|\nabla B\|_{\dot{H}^{\frac12}}+{\textstyle{\frac12}})\|\La^s \nabla v\|_{L^2}^2+C\|\La^s \nabla B\|_{L^2}^2\|v\|_{\dot{H}^{\frac32}}^2,\\
|E_7|&\lesssim(\|\La^s v\|_{L^6}\|u\|_{L^3}+\|\La^s u\|_{L^6}\|v\|_{L^3})\|\La^s \nabla v\|_{L^2}\\
&\lesssim\|(u, v)\|_{\dot{H}^{\frac12}}(\|\La^s \nabla u\|_{L^2}^2+\|\La^s \nabla v\|_{L^2}^2),\\
|E_8|&\lesssim(\|\La^s v\|_{L^6}\|\nabla B\|_{L^3}+\|\La^s\nabla B\|_{L^6}\|v\|_{L^3})\|\La^s \nabla v\|_{L^2}\\
&\lesssim\|(\nabla B, v)\|_{\dot{H}^{\frac12}}(\|\La^s \nabla B\|_{\dot{H}^1}^2+\|\La^s \nabla v\|_{L^2}^2).
\end{aligned}$$
Since $\|(u, B, v)\|_{\dot{H}^\frac12}$ is small, putting the above estimates  and \eqref{3.1500} together, and using   \eqref{eq:equivnorm}, one gets after time integration that
$$\displaylines{\quad
\|u(t)\|_{\dot H^s}^2+\|B(t)\|_{\dot H^s}^2+\|v(t)\|_{\dot{H}^s}^2+\int^t_0(\|\nabla u\|_{\dot H^s}^2+\|\nabla B\|_{\dot H^s}^2+\|\nabla v\|_{\dot{H}^s}^2)\,d\tau\hfill\cr\hfill
\leq \|u_0\|_{\dot H^s}^2+\|B_0\|_{\dot H^s}^2+\|v_0\|_{\dot{H}^s}^2+C \int_0^t(\|u\|_{\dot H^s}^2+\|v\|_{\dot{H}^s}^2)\|v\|_{\dot{H}^\frac32}^2\,d\tau.\quad}$$
By Gronwall  inequality, we then get for all $t\geq 0,$
$$\displaylines{\quad
\|(u,B,v)(t)\|_{\dot H^s}^2+\int^t_0\|(\nabla u,\nabla B,\nabla v)\|_{\dot H^{s}}^2\,d\tau\hfill\cr\hfill
\leq \|(u_0,B_0,v_0)\|_{{\dot H}^{s}}\exp\biggl(C\int_0^t \|v\|_{\dot{H}^\frac32}^2\,d\tau\biggr)\cdotp\quad}
$$
Putting together with \eqref{3.1500} and using that  $\int_0^t \|v(\tau)\|_{\dot{H}^\frac32}^2\,d\tau$ is bounded thanks 
to the first part of the theorem, we get a global-in-time control of the Sobolev norms. 

Of course, to make the proof rigorous, one has to smooth out the data.  For that, one 
 can proceed exactly as in  \cite{Da19}.
 This completes the proof of Theorem \ref{Th_1}.\hfill$\square$
 \medbreak
 \begin{proof}[Proof of Corollary \ref{C_1}]
As the solution $(u, B)$ belongs to 
$$\mathcal C_b(\mathbb R_+; \dot H^{\frac{1}{2}}(\mathbb{R}^3))\cap L^2(\mathbb R_+; \dot H^{\frac{3}{2}}(\mathbb{R}^3))$$
and $J\in\cC_b(\mathbb R_+; \dot H^{\frac{1}{2}}(\mathbb{R}^3))\cap L^2(\mathbb R_+; \dot H^{\frac{3}{2}}(\mathbb{R}^3)),$ the interpolation inequality between Sobolev norms implies that $(u, B, J)$ belongs to the space $L^8_{loc}(\R_+;\dot{H}^{\frac34}(\mathbb{R}^3)),$ which, in view of Sobolev embedding, is a subspace of $L^4_{loc}(\R_+;L^4(\mathbb{R}^3)).$ Now,  the right-hand sides of the first two equations of \eqref{1.100}
belongs to $L^2_{loc}(\R_+;\dot{H}^{-1}(\mathbb{R}^3))$, which 
ensures time continuity.
\smallbreak
 In order to prove that  the energy balance is fulfilled, 
one can use the same approximation scheme as in the proof of existence
(the energy balance is clearly satisfied by $(u_n,B_n)$) then observe
that $(u_n,B_n)_{n\in\N}$ is actually a Cauchy sequence in 
$L^\infty(\R_+;L^2(\R^3))\cap L^2(\R_+;\dot H^1(\R^3)),$ as may be checked
by arguing as in the proof of uniqueness. 
\smallbreak
Let us next prove that $(u,B,v)$  goes to $0$ in $\dot H^{\frac12}(\mathbb{R}^3)$ when $t\to+\infty.$ Inequality 
\eqref{2.888} and interpolation  guarantee that $B\in L^4(\R_+;\dot H^1(\mathbb{R}^3)).$ 
Hence one can find  some $t_0\geq0$
so that $v(t_0)\in L^2(\mathbb{R}^3).$  Then, performing 
an energy argument on the equation satisfied by~$v,$ we get for all $t\geq t_0,$
$$\displaylines{\quad
\|v(t)\|_{L^2}^2+\int_{t_0}^t\|\nabla v\|_{L^2}^2\,d\tau\leq
\|v(t_0)\|_{L^2}^2\hfill\cr\hfill+\int_{t_0}^t \bigl(\|B\otimes B-u\otimes u\|_{L^2}
+\|v\times u\|_{L^2}+\|v\cdot\nabla B\|_{L^2}\bigr)^2\,d\tau.\quad}
$$
Using repeatedly the product law $\dot H^1(\mathbb{R}^3)\times\dot H^{\frac12}(\mathbb{R}^3)\to L^2(\mathbb{R}^3),$ the equivalence \eqref{eq:equivnorm} 
and adding up to \eqref{eq:energy} yields
$$\displaylines{\quad
\|(u,B,v)(t)\|_{L^2}^2+\int_{t_0}^t\|(u,B,v)\|_{\dot H^1}^2\leq
\|(u,B,v)(t_0)\|_{L^2}^2\hfill\cr\hfill+C\int_{t_0}^t\|(u,B,v)\|_{\dot H^1}^2\|(u,B,v)\|_{\dot H^{\frac12}}^2\,d\tau.\quad}
$$
Since $\|(u,B,v)(t)\|_{\dot H^{\frac12}}$ is small for all $t\geq0,$ the last term may be
absorbed by the left-hand side, and one can conclude (by interpolation)
that $(u,B,v)\in L^4(t_0,+\infty;\dot H^{\frac12}(\mathbb{R}^3)).$ 
Therefore, for all $\sigma>0$ one may find some $t_1\geq t_0$ so that 
$\|(u,B,v)(t_1)\|_{\dot H^{\frac12}}\leq\sigma.$ Combining with \eqref{2.888} allows
to conclude the proof of \eqref{eq:decay}.
\end{proof}

  \medbreak
 \begin{proof}[Proof of Corollary \ref{C_2}]
   We shall  argue as  in \cite{Ba11} and \cite{Ga03}, splitting  the 
   data into  a small part in $\dot H^{\frac12}(\R^3)$ and a (possibly) large part in 
   $H^{\frac12}(\R^3).$ More precisely, we set    
    $$v_0=u_0-\nabla\times B_0,\quad u_0=u_{0,\ell}+u_{0,h},\quad B_0=B_{0,\ell}+B_{0,h},\quad v_0=v_{0,\ell}+v_{0,h}$$
  with
  $$u_{0,\ell}:=\mathcal{F}^{-1}(\mathbf{1}_{\mathcal{B}(0, \rho)}\widehat{u_0}),\quad B_{0,\ell}:=\mathcal{F}^{-1}(\mathbf{1}_{\mathcal{B}(0, \rho)}\widehat{B_0})\andf v_{0,\ell}:=\mathcal{F}^{-1}(\mathbf{1}_{\mathcal{B}(0, \rho)}\widehat{v_0}).$$
  Fix some $\eta\in(0,c)$ (with $c$ being the constant of \eqref{smallcon})
  and choose $\rho$  such that
  $$\|(u_{\ell,0}, B_{\ell,0}, v_{\ell,0})\|_{\dot{H}^\frac12}<\frac\eta2\cdotp$$
   By Theorem \ref{Th_1}, we know that there exists a unique global solution $(u_\ell, B_\ell)$
  to the Hall-MHD system supplemented  with data $(u_{\ell,0}, B_{\ell,0}),$ that  satisfies
\begin{equation}\label{lowsolu}
\|(u_\ell, B_\ell, v_\ell)\|_{L^\infty({\dot{H}^\frac12})}^2
+\frac12\|(u_\ell, B_\ell, v_\ell)\|_{L^2({\dot{H}^\frac32})}^2<\frac\eta2\with
v_\ell:=u_\ell-\nabla\times B_\ell\cdotp
\end{equation}
Let  $(u_h, B_h, v_h):=(u-u_\ell, B-B_\ell, v-v_\ell).$ We have $(u_h, B_h, v_h)\in\mathcal C(\mathbb R_+; \dot H^{\frac{1}{2}})\cap L^4(\mathbb R_+; \dot H^{1})$  since that result holds for both $(u, B, v)$ and $(u_\ell, B_\ell, v_\ell)$ (use interpolation). Furthermore,
 $(u_{0,h},\,B_{h,0}, J_{h,0})$ is in $L^2(\mathbb{R}^3)$ owing to the high-frequency cut-off
and we have  
\begin{equation}\label{3.d2}
\left\{
\begin{aligned}
 &\partial_t {u_h}-\Delta {u_h}:=\wt R_1,\\
 &\partial_t {B_h}-\Delta {B_h}:=\wt R_2,\\
 &\partial_t v_h-\Delta v_h:=\wt R_1+\wt R_3+\wt R_4+ \wt R_5,\\
 &(u_h, B_h, v_h)|_{t=0}{=}(u_{0,h},\,B_{h,0}, J_{h,0}),
\end{aligned}
\right.
\end{equation}
where
\begin{align*}
&\wt R_1:=\mathcal{P}(B\cdot\nabla B_h+B_h\cdot\nabla B_\ell-u\cdot\nabla u_h-u_h\cdot\nabla u_\ell),\\
&\wt R_2:=\nabla\times(v\times B_h+v_h\times B_\ell),\\
&\wt R_3:=-\nabla\times((\nabla\times v)\times B_h+(\nabla\times v_h)\times B_\ell),\\
&\wt R_4:=\nabla\times(v\times u_h+v_h\times u_\ell),\\
&\wt R_5:=2\nabla\times(v\cdot\nabla B_h+v_h\cdot\nabla B_\ell).
\end{align*}
Let us bound the terms $\wt R_1,$   $\wt R_2,$  $\wt R_4$ and  $\wt R_5$ as in  the proof of the uniqueness part of Theorem \ref{Th_1}, and  estimate 
 $\wt R_3$ as follows:
\begin{align*}
\|\wt R_3\|_{L^2_T(\dot{H}^{-1})}&\leq \|(\nabla\times v)\times B_h\|_{L^2_T(L^2)}+\|(\nabla\times v_h)\times B_\ell\|_{L^2_T(L^2)}\\
&\leq \|v\|_{L^4_T(\dot{H}^{1})} \|B_h\|_{L^4_T(L^\infty)}+\|v_h\|_{L^4_T(\dot{H}^{1})} \|B_\ell\|_{L^4_T(L^\infty)}.
\end{align*}
Note that our assumptions ensure that $B_\ell$ and $B_h$ are in $L^4(0, T; \dot{H}^{1}\cap \dot{H}^2),$ thus
 in $L^4(0, T; L^\infty)$ owing to the Gagliardo-Nirenberg inequality \eqref{eq:GN2} with $s=1$ and $s'=1.$
  Then one can conclude by a straightforward energy argument that
  $$\displaylines{\quad\|(u_h, B_h, v_h)(t)\|_{L^2}^2+2\int_0^t\|(u_h, B_h, v_h)\|_{\dot H^1}^2 \,d\tau
  \leq    \|(u_h, B_h, v_h)(0)\|_{L^2}^2
  \hfill\cr\hfill+C\int_0^t\wt V\|(u_h, B_h, v_h)\|_{L^2}^2\,d\tau\with
\wt V(t):=\|(u,u_\ell, B,B_\ell, v,v_\ell)(t)\|_{\dot{H}^{1}}^4\quad}$$
and  Gronwall's lemma  thus implies that 
$$\displaylines{\quad
\|(u_h, B_h, v_h)(t)\|_{L^2}^2+\int_0^t\|(u_h, B_h, v_h)(\tau)\|_{\dot H^1}^2\,d\tau\hfill\cr\hfill \leq\|(u_{h,0}, B_{h,0}, v_{h,0})\|_{L^2}^2\exp\Bigl(C{\int_0^t \wt V(\tau)\,d\tau}\Bigr)\cdotp\quad}
$$
Since $\wt V$ is globally integrable on $\R_+$ thanks to our assumptions and \eqref{lowsolu},
we see by interpolation that $(u_h,B_h,v_h)$ is in $L^4(\R_+;\dot H^{\frac12}).$ This in particular implies that
there exists some $t_0\geq0$ such that $\|(u_h, B_h, v_h)(t_{0})\|_{\dot H^\frac12}<\eta/2.$
 Hence $\|(u, B, v)(t_{0})\|_{\dot H^\frac12}<\eta$ and  Theorem \ref{Th_1} thus  ensures that 
$\|(u, B, v)(t)\|_{\dot H^\frac12}<\eta$ for all $t\geq t_0.$ This completes 
the proof of the corollary.  \end{proof}


\section{Weak-strong uniqueness}\label{s:weakstrong}

This section is devoted to the proof of  Theorem \ref{Th_3}. 
Let us underline that, in contrast with the other parts of the paper,  the proof works for any positive coefficients $\mu,$ $\nu$ and $\varepsilon.$
 Furthermore, it could be adapted to the 2$\frac12$D flows of the next section. 
 For expository purpose however, we   focus on the 3D case. 
\medbreak
Throughout, we shall repeatedly use the following result. 
\begin{lemma}\label{Le_P}
Let $a, b, c\in L^\infty(0,T; L^2(\R^3))\cap L^2(0,T; \dot{H}^1(\R^3))$ be three divergence free vector fields in $\R^3$. The following 
inequalities  hold:
\begin{itemize}
\item If, in addition,  $b$ belongs to $L^4(0, T; \dot{H}^1(\R^3)),$ then
\begin{equation}\label{Le_P1}
\Bigl|\int_0^T (a\cdot\nabla b\,|\, c)\,d\tau\Bigr|\lesssim \|a\|_{L^\infty_T(L^2)}^\frac12\|a\|_{L^2_T(\dot{H}^1)}^\frac12\|b\|_{L^4_T(\dot{H}^1)}\|c\|_{L^2_T(\dot{H}^1)}.
\end{equation}
\item If, in addition,   $c$ belongs to $L^4(0, T; \dot{H}^1(\R^3)),$ then
\begin{equation}\label{Le_P2}
\Bigl|\int_0^T (a\cdot\nabla b\,|\, c)\,d\tau\Bigr|\lesssim \|a\|_{L^\infty_T(L^2)}^\frac12\|a\|_{L^2_T(\dot{H}^1)}^\frac12\|b\|_{L^2_T(\dot{H}^1)}\|c\|_{L^4_T(\dot{H}^1)}.
\end{equation}
\item If, in addition,  $\nabla\times c$ belongs to $L^4(0, T; \dot{H}^1(\R^3)),$ then
\begin{equation}\label{Le_P3}
\Bigl|\int_0^T \bigl(\nabla\times ((\nabla\times a)\times b)\,|\, c\bigr)\,d\tau\Bigr|\lesssim \|a\|_{L^2_T(\dot{H}^1)}\|b\|_{L^\infty_T(L^2)}^\frac12\|b\|_{L^2_T(\dot{H}^1)}^\frac12\|\nabla\times c\|_{{L^4_T(\dot{H}^1)}}.
\end{equation}
\end{itemize}
\end{lemma}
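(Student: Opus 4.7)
The three inequalities share a common structure: pointwise-in-time Hölder estimates in space combined with Gagliardo–Nirenberg interpolation between $L^2$ and $\dot H^1$ (recall $\dot H^1(\R^3)\hookrightarrow L^6(\R^3)$, whence $\|f\|_{L^3}\lesssim\|f\|_{L^2}^{1/2}\|f\|_{\dot H^1}^{1/2}$), followed by a time Hölder inequality whose exponents are dictated by the regularity assumed on each factor. None of the three should present any real obstacle; the only thing to be careful about is matching the time exponents.

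For \eqref{Le_P1}, I would write pointwise in time
\begin{equation*}
|(a\cdot\nabla b\,|\,c)|\leq \|a\|_{L^3}\|\nabla b\|_{L^2}\|c\|_{L^6}
\lesssim \|a\|_{L^2}^{1/2}\|a\|_{\dot H^1}^{1/2}\|b\|_{\dot H^1}\|c\|_{\dot H^1},
\end{equation*}
and then apply Hölder in time with exponents $(\infty,4,4,2)$ applied to the four factors respectively, since $\tfrac14+\tfrac14+\tfrac12=1$. This yields the bound.

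For \eqref{Le_P2}, since $\div a=0$, integration by parts gives $(a\cdot\nabla b\,|\,c)=-(a\cdot\nabla c\,|\,b)$ in the distributional sense (valid because all factors have enough regularity, e.g.\ $a\otimes c\in L^2_T(L^2)$ and $b\in L^2_T(\dot H^1)$). Hence \eqref{Le_P2} follows immediately by applying \eqref{Le_P1} with the roles of $b$ and $c$ swapped.

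For \eqref{Le_P3}, I would first use the duality identity \eqref{v1} to write
\begin{equation*}
\bigl(\nabla\times((\nabla\times a)\times b)\,\bigl|\,c\bigr)
=\bigl((\nabla\times a)\times b\,\bigl|\,\nabla\times c\bigr),
\end{equation*}
and then use Hölder in space with exponents $(2,3,6)$:
\begin{equation*}
\bigl|\bigl((\nabla\times a)\times b\,\bigl|\,\nabla\times c\bigr)\bigr|
\lesssim \|a\|_{\dot H^1}\,\|b\|_{L^2}^{1/2}\|b\|_{\dot H^1}^{1/2}\,\|\nabla\times c\|_{\dot H^1},
\end{equation*}
using again the $\dot H^1\hookrightarrow L^6$ embedding on the curl of $c$ and the interpolation $L^2$–$L^6$ for $b$. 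A final time Hölder with exponents $(2,\infty,4,4)$ (checking $\tfrac12+0+\tfrac14+\tfrac14=1$) delivers \eqref{Le_P3}. The main point to double-check throughout is merely that the sum of reciprocals of the time exponents equals one; the spatial estimates and the integration by parts are entirely routine.
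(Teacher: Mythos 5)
Your proposal is correct and follows essentially the same route as the paper: spatial H\"older combined with the embedding $\dot H^1(\R^3)\hookrightarrow L^6$ and the interpolation $\|f\|_{L^3}\lesssim\|f\|_{L^2}^{1/2}\|f\|_{\dot H^1}^{1/2}$, then H\"older in time, with the identity \eqref{v1} handling the third inequality exactly as in the paper. The only (harmless) cosmetic differences are that you keep the gradient on $b$ in \eqref{Le_P1} where the paper moves it onto $c$ via $a\cdot\nabla b=\div(b\otimes a)$ and $\dot H^1$--$\dot H^{-1}$ duality, and that you deduce \eqref{Le_P2} from \eqref{Le_P1} by antisymmetry rather than repeating the estimate.
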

\begin{proof}
To prove the first inequality, we use the fact that $a\cdot\nabla b=\div(b\otimes a)$  
and the duality inequality between $\dot H^1$ and $\dot H^{-1}$ so as to write 
$$\Bigl|\int_0^T (a\cdot\nabla b\,|\, c)\,d\tau\Bigr| \leq \int_0^T \|b\otimes a\|_{L^2} \|c\|_{\dot H^1}\,d\tau.$$
Hence, thanks to H\"older and Gagliardo-Nirenberg inequalities, and  Sobolev embedding \eqref{em},
\begin{align*}
\Bigl|\int_0^T(a\cdot\nabla b\,|\, c)\,d\tau\Bigr|&\leq \int_0^t \|a\|_{L^3}\|b\|_{L^6}\|\nabla c\|_{L^2}\,d\tau\\
&\lesssim \int_0^T \|a\|_{\dot{H}^\frac12}\|b\|_{\dot{H}^1}\|c\|_{\dot{H}^1}\,d\tau\\
&\lesssim  \|a\|_{L^4_T(\dot{H}^\frac12)}\|b\|_{L^4_T(\dot{H}^1)}\|c\|_{L^2_T(\dot{H}^1)}.
\end{align*}
Using the interpolation inequality \eqref{i1} yields \eqref{Le_P1}. 
\medbreak
Proving \eqref{Le_P2} is  similar. To get the last inequality, we take advantage of  \eqref{v1}, of  H\"older and Gagliardo-Nirenberg inequalities, and  Sobolev embedding:
\begin{align*}
\Bigl|\int_0^T \bigl(\nabla\times ((\nabla\times a)\times b)\,|\, c\bigr)\,d\tau\Bigr|&= \Bigl|\int_0^T \bigl((\nabla\times a)\times b\,|\, \nabla\times c\bigr)\,d\tau\Bigr|\\
&\leq  \int_0^T\|\nabla\times a\|_{L^2}\|b\|_{L^3}\|\nabla\times c\|_{L^6}\,d\tau\\
&\lesssim  \int_0^T \|a\|_{\dot{H}^1}\|b\|_{\dot{H}^\frac12}\|\nabla\times c\|_{\dot{H}^1}\,d\tau\\
&\lesssim \|a\|_{L^2_T(\dot{H}^1)}\|b\|_{L^4_T(\dot{H}^\frac12)}\|\nabla\times c\|_{{L^4_T(\dot{H}^1)}}.
\end{align*}
Using  the interpolation inequality \eqref{i1} completes the proof. 
\end{proof}
\medbreak
One can now start the proof of Theorem \ref{Th_3}. 
Let us recall our situation:   we are given two 
  Leray-Hopf solutions $(u, B)$ and $(\bar u,\bar B)$ in $L^\infty_T(L^2)\cap L^2_T(\dot{H}^1)$ 
  corresponding to the same initial  data  $(u_0, B_0)\in L^2$ with $\div u_0=\div B_0=0,$ 
  and assume in addition that   $u$ and  $J:=\nabla\times B$  (and thus also $B$) 
  are in $L^4_T(\dot{H}^1).$ We want to prove that the two solutions coincide on $[0,T],$
  that is to say   
  \begin{equation}\label{eq:zero}
  (\overline\du, \overline\dB)\equiv (0,0)\with (\overline\du, \overline\dB) :=(u-\bar u, B-\bar B).\end{equation}
    By definition of  what a  Leray-Hopf solution is,  both $(u,B)$ and $(\bar u,\bar B)$ 
    satisfy the energy inequality \eqref{energyineq} on $[0,T],$ which implies that  for all $t\geq T,$
\begin{multline}\label{equni}
\|(\overline{\du}, \overline{\dB})(t)\|_{L^2}^2+2\mu\int_0^t \|\nabla \overline{\du}\|_{L^2}^2\,d\tau+2\nu\int_0^t \|\overline{\dB}\|_{L^2}^2\,d\tau\\
\leq 2\|(u_0, B_0)\|_{L^2}^2-2(u(t)\,|\,\bar u(t))-2(B(t)\,|\,\bar B(t))\\-4\mu\int_0^t (\nabla u\,|\,\nabla\bar u)\,d\tau
-4\nu\int_0^t (\nabla B\,|\,\nabla\bar B)\,d\tau.\end{multline}
 Then, the  key to  proving  \eqref{eq:zero} is the following lemma, which is an adaptation to our setting 
 of a similar result for the Navier-Stokes equations in  \cite{Ga02}. 
 \begin{lemma}\label{L_4.2}
Under the assumptions of Theorem \ref{Th_3}, we have for all time $t\leq T,$
$$\displaylines{
(u(t)\,|\,\bar u(t))+(B(t)\,|\,\bar B(t))+2\mu\int_0^t(\nabla u\,|\,\nabla\bar u)\,d\tau+2\nu\int_0^t(\nabla B\,|\,\nabla\bar B)\,d\tau
\hfill\cr\hfill=\|u_0\|_{L^2}^2+\|B_0\|_{L^2}^2+\int_0^t\Bigl((\overline{\du}\cdot\nabla B\,|\,\overline{\dB})+(\overline{\du}\cdot\nabla u\,|\,\overline{\du})-(\overline{\dB}\cdot\nabla B\,|\,\overline{\du})\hfill\cr\hfill-(\overline{\dB}\cdot\nabla u\,|\,\overline{\dB})-\eps (\nabla\times((\nabla\times\overline{\dB})\times \overline{\dB})\,|\,B)\Bigr)d\tau.}$$
\end{lemma}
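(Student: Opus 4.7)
The plan is to pair the two solutions together in the Hall-MHD weak formulations: take $\bar u$ as a test function in the equation for $u$ and, symmetrically, $u$ as a test function in the equation for $\bar u$, and do the analogous cross-test on the magnetic field. Summing the four resulting equalities, integrating in time on $[0,t]$, and using the chain rule
$$\frac{d}{d\tau}(u|\bar u)=\langle\partial_\tau u,\bar u\rangle+\langle u,\partial_\tau\bar u\rangle$$
(together with its $B,\bar B$ counterpart), the left-hand side of the lemma comes out, together with the initial cross terms $(u_0|u_0)+(B_0|B_0)=\|u_0\|_{L^2}^2+\|B_0\|_{L^2}^2$ on the right-hand side.

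The bulk of the argument is then a purely algebraic simplification of the resulting trilinear right-hand side. Using the divergence-free conditions $\div u=\div\bar u=\div B=\div\bar B=0$ and the integration-by-parts formula $(a\cdot\nabla b\,|\,c)=-(a\cdot\nabla c\,|\,b)$ (valid whenever $\div a=0$), the pure velocity cross terms combine via the expansion of $(\overline{\du}\cdot\nabla u\,|\,\overline{\du})$ with $u=\bar u+\overline{\du}$, giving
$$-(u\cdot\nabla u\,|\,\bar u)-(\bar u\cdot\nabla\bar u\,|\,u)=(\overline{\du}\cdot\nabla u\,|\,\overline{\du}).$$
A similar but longer bookkeeping on the MHD mixed terms, using repeatedly the same identity, yields after cancellation
$$(\overline{\du}\cdot\nabla B\,|\,\overline{\dB})-(\overline{\dB}\cdot\nabla B\,|\,\overline{\du})-(\overline{\dB}\cdot\nabla u\,|\,\overline{\dB}).$$
For the Hall contribution, I first transfer the curl onto the test function via \eqref{v1}, turning the two Hall terms into $-\eps(J\times B\,|\,\bar J)-\eps(\bar J\times\bar B\,|\,J)$ with $\bar J:=\nabla\times\bar B$; then, writing $J=\bar J+\nabla\times\overline{\dB}$ and $B=\bar B+\overline{\dB}$ and exploiting the orthogonality $(v\times w\,|\,v)=0$ (which kills the diagonal contributions $(J\times B\,|\,J)$ and $(\bar J\times\bar B\,|\,\bar J)$), I recover exactly $-\eps\bigl(\nabla\times((\nabla\times\overline{\dB})\times\overline{\dB})\,|\,B\bigr)$.

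The main obstacle is the rigorous justification that each solution may serve as a test function in the weak formulation of the other. Although both solutions lie in $L^\infty_T(L^2)\cap L^2_T(\dot H^1)$, the time derivatives $\partial_\tau u,\partial_\tau\bar u,\partial_\tau B,\partial_\tau\bar B$ are a priori only distributions, so one needs a standard time mollification (Friedrichs truncation) procedure to legitimize both the duality pairings $\langle\partial_\tau u,\bar u\rangle$, etc., and the chain rule for $(u|\bar u)$ and $(B|\bar B)$. The absolute convergence of every trilinear integral arising in the process, including the Hall-type one $(J\times B\,|\,\bar J)$, is precisely what the extra integrability $u,J\in L^4_T(\dot H^1)$ combined with the Leray-Hopf controls on $(\bar u,\bar B)$ provides, in the spirit of Lemma \ref{Le_P}. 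Passing to the limit in the mollification parameter then yields the identity announced in Lemma \ref{L_4.2}.
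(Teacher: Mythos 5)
Your proposal is correct and follows essentially the same route as the paper: cross-testing each solution against the other, justifying the duality pairings and the chain rule for $(u\,|\,\bar u)$ and $(B\,|\,\bar B)$ by a regularization step (the paper uses smooth approximating sequences with convergence in the $L^4_T(\dot H^1)$ and $L^{4/3}_T(\dot H^{-1}\times\dot H^{-2})$ topologies rather than time mollification, but the role is identical), and then reducing the trilinear terms via the divergence-free integration by parts and the cyclic triple-product identity, with the extra hypothesis $u,J\in L^4_T(\dot H^1)$ providing exactly the continuity of the trilinear forms needed to extend the smooth-field identities. Your algebraic cancellations, including the Hall term reduction via $(v\times w\,|\,v)=0$, match equations \eqref{eq:ws9}--\eqref{eq:ws12} of the paper.
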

\begin{proof} The result is obvious  if $(u, B)$ and $(\bar u, \bar B)$ are smooth  and decay at infinity.
In our setting where the solutions are rough, it requires some justification. Therefore,
 we consider two sequences  $(u_n, B_n)_{n\in\N}$ and $(\bar u_n, \bar B_n)_{n\in\N}$ of smooth and  divergence free vector fields, such that
\begin{equation}\label{eq:ws1}\lim_{n\to\infty} (u_n, B_n,\nabla\times B_n)=(u, B,\nabla\times B)\quad{\rm{in}}\quad L^4_T(\dot{H}^1)\end{equation}
\begin{equation}\label{eq:ws3}\andf\lim_{n\to\infty} (u_n, B_n,\bar u_n, \bar B_n)=(u,B,\bar u, \bar B)\\\quad{\rm{in}}\quad L^2_T(\dot{H}^1)\cap L^\infty_T(L^2).\end{equation}
Since  our  assumptions  on  $(u,B)$ also 
ensure  that $(\partial_tu,\partial_tB)$ is in $L^2_T(\dot H^{-1}),$   one can  require 
in addition that 
\begin{equation}\label{eq:ws4}\lim_{n\to\infty} (\partial_t u_n,\partial_t B_n) =(\partial_tu,\partial_tB)
\quad{\rm{in}}\quad L^2_T(\dot H^{-1}).\end{equation}
Likewise, that $(\bar u,\bar B)$ is a Leray-Hopf solution guarantees that 
$(\partial_t\bar u,\partial_t\bar B)$ is in $L^{4/3}_T(\dot H^{-1}\times\dot H^{-2})$ 
(observe for exemple that $\bar u$ and $\bar B$ are in $L^{8/3}_T(L^4)$
and thus $u\otimes u$ and $B\otimes B$ are in $L^{4/3}_T(L^2)$ 
and similar properties for the other nonlinear terms of the Hall-MHD system). 
One shall thus require also that 
\begin{equation}\label{eq:ws2}  \lim_{n\to\infty} (\partial_t \bar u_n,\partial_t \bar B_n) =(\partial_t\bar u,\partial_t\bar B)
\quad{\rm{in}}\quad L^{4/3}_T(\dot H^{-1}\times \dot H^{-2}).\end{equation}
Now, taking  $(\bar u_n,\bar B_n)$ and $(u_n,B_n)$  as test functions in the 
weak formulation of \eqref{1.1}, \eqref{1.2}, \eqref{1.3} for $(u,B)$ and $(\bar u,\bar B),$ respectively, 
we get  for all $t\leq T,$
\begin{align}\label{eq:ws5a}
\int_0^t \Bigl((\partial_\tau  u\,|\, \bar u_n)+\mu (\nabla u\,|\,\nabla\bar u_n)+(u\cdot\nabla u\,|\, \bar u_n)-(B\cdot\nabla B\,|\, \bar u_n)\Bigl)\,d\tau&=0,\\\label{eq:ws5b}
\int_0^t \Bigl((\partial_\tau  \bar u\,|\,  u_n)+\mu (\nabla\bar u\,|\, \nabla u_n)+(\bar u\cdot\nabla\bar u\,|\, u_n)-(\bar B\cdot\nabla\bar B\,|\, u_n)\Bigl)\,d\tau&=0,\\
\label{eq:ws5c}
\int_0^t \Bigl((\partial_\tau  B\,|\,\bar B_n)+\nu (\nabla B\,|\, \nabla\bar B_n)+(u\cdot\nabla B\,|\, \bar B_n)-(B\cdot\nabla u\,|\, \bar B_n)\quad&\\+\eps(\nabla\times(J\times B)\,|\, \bar B_n)\Bigl)\,d\tau&=0,\\\label{eq:ws5d}
\int_0^t \Bigl((\partial_\tau  \bar B\,|\,  B_n)+\nu (\nabla\bar B\,|\,\nabla B_n)+(\bar u\cdot\nabla\bar B\,|\,B_n)-(\bar B\cdot\nabla\bar u\,|\,B_n)\quad&\\+\eps(\nabla\times(\bar J\times\bar B)\,|\, B_n)\Bigl)\,d\tau&=0.\end{align}
Since   $\nabla\bar u_n$ and $\nabla u_n$ converge to $\nabla\bar u$ and $\nabla u,$ 
in $L^2_T(L^2),$ we deduce that 
$$\lim_{n\to\infty}\biggl(\int_0^t(\nabla u\,|\nabla\bar u_n) \,d\tau+
\int_0^t(\nabla\bar u\,|\nabla u_n) \,d\tau\biggr)=2\int_0^t(\nabla u\,|\nabla\bar u) \,d\tau.$$
Thanks to \eqref{eq:ws1} and \eqref{eq:ws3}, and Lemma \ref{Le_P}, we have 
$$\lim_{n\to\infty}\int_0^t(u\cdot\nabla u\,|\, \bar u_n)\,d\tau=\int_0^t(u\cdot\nabla u\,|\, \bar u)\,d\tau,$$
$$\lim_{n\to\infty}\int_0^t(\bar u\cdot\nabla\bar u\,|\, u_n)\,d\tau=\int_0^t(\bar u\cdot\nabla\bar u\,|\, u)\,d\tau,$$
and one can pass to the limit similarly in all the quadratic terms that do not contain $J$ or $\bar J.$ 
Finally,  using the  following vector identity
\begin{equation}\label{eq:ws6}(a\times b)\cdot c=(c\times a)\cdot b=(b\times c)\cdot a,\end{equation}
Inequality \eqref{Le_P3} and \eqref{eq:ws1}, 
we get, since $\bar B_n$ is smooth, 
  $$\begin{aligned}
\int_0^t(\nabla\times( J\times B)\,|\,\bar B_n)(\tau)\,d\tau
&=\int_0^t(J\times B\,|\, \nabla\times\bar B_n)\,d\tau\\
&=\int_0^t(B\times (\nabla\times\bar B_n)\,|\, J)\,d\tau\\
&=-\int_0^t(\nabla\times((\nabla\times\bar B_n)\times B)\,|\, B)\,d\tau.
\end{aligned}$$
Hence, by \eqref{eq:ws3},  \eqref{Le_P3} and \eqref{eq:ws6},
$$\begin{aligned}
\lim_{n\to\infty}\int_0^t(\nabla\times( J\times B)\,|\, \bar B_n)\,d\tau&=-\int_0^t(\nabla\times((\nabla\times\bar B)\times B)\,|\,B)\,d\tau\\
&=\int_0^t(\nabla\times( J\times B)\,|\, \bar B)\,d\tau.
\end{aligned}$$
In order to prove that 
$$\lim_{n\to\infty}\int_0^t(\nabla\times(\bar J\times\bar B)\,|\, B_n)\,d\tau=\int_0^t(\nabla\times(\bar J\times\bar B)\,|\, B)\,d\tau,$$
one  may use directly  \eqref{Le_P3} and \eqref{eq:ws1}. 


In order to pass to the limit in the term of \eqref{eq:ws5a} with a time derivative, 
one may use \eqref{eq:ws3} and the fact that $\partial_tu$ is in $L^2_T(\dot H^{-1}).$
This gives
$$ \lim_{n\to\infty}\int_0^t(\partial_\tau u\,|\, \bar u_n)\,d\tau=\int_0^t (\partial_\tau u\,|\, \bar u)\,d\tau.$$
Next,  since   $\partial_t\bar u$ is in $L^{4/3}_T(\dot H^{-1}),$  
\eqref{eq:ws1} enables us to write that 
$$\lim_{n\to\infty}\int_0^t (\partial_\tau \bar u\,|\, u_n)\,d\tau=\int_0^t(\partial_\tau \bar u\,|\,u)\,d\tau.$$
In order to pass to the limit in the term of \eqref{eq:ws5c} with $\partial_tB,$ it
suffices to use the fact that $\partial_tB$ is in $L^2_T(\dot H^{-1})$
and \eqref{eq:ws3}. 
Passing  to the limit in the term of \eqref{eq:ws5c} with $\partial_t\bar B,$
relies on the property that $\partial_t\bar B$ is in $L^{\frac43}_T(\dot H^{-2})$
and on \eqref{eq:ws1}. 

Finally, passing to the limit in \eqref{eq:ws5a} and \eqref{eq:ws5b}, and adding up the 
resulting equalities yields for all $t\in[0,T],$ 
\begin{multline}\label{eq:ws7a}
\int_0^t \Bigl((\partial_\tau  u\,|\, \bar u)+ (\partial_\tau  \bar u\,|\,  u)\Bigr)d\tau
+2\mu \int_0^t (\nabla u\,|\,\nabla\bar u)\,d\tau\\
+\int_0^t\Bigl((u\cdot\nabla u\,|\, \bar u)-(B\cdot\nabla B\,|\, \bar u)+(\bar u\cdot\nabla\bar u\,|\, u)
-(\bar B\cdot\nabla\bar B\,|\, u)\Bigl)\,d\tau=0.\end{multline}
Applying the same procedure for  \eqref{eq:ws5a} and \eqref{eq:ws5b}, we get
\begin{multline}\label{eq:ws7b}
\int_0^t \Bigl((\partial_\tau  B\,|\, \bar B)+ (\partial_\tau  \bar B\,|\,  B)\Bigr)d\tau
+2\nu \int_0^t (\nabla B\,|\,\nabla\bar B)\,d\tau\\
+\int_0^t\Bigl((u\cdot\nabla B\,|\, \bar B)-(B\cdot\nabla u\,|\, \bar B)+(\bar u\cdot\nabla\bar B\,|\,B)-(\bar B\cdot\nabla\bar u\,|\,B)\Bigr)d\tau\\
+\eps\int_0^t\Bigl((\nabla\times(J\times B)\,|\, \bar B)
+(\nabla\times(\bar J\times\bar B)\,|\, B)\Bigr)d\tau=0.
\end{multline}
We claim that 
\begin{equation}\label{eq:ws8a}\int_0^t\bigl((\partial_\tau u\,|\, \bar u)+(u\,|\,\partial_\tau\bar u)\bigr)\,d\tau
=(u(t)\,|\,\bar u (t))- \|u_0\|_{L^2}^2.\end{equation}
Indeed, since both $u_n$ and $\bar u_n$ are smooth, we have 
$$\int_0^t\bigl((\partial_\tau u_n\,|\, \bar u_n)+(u_n\,|\,\partial_\tau\bar u_n)\bigr)\,d\tau
=(u_n(t)\,|\,\bar u_n(t))- (u_n(0)\,|\bar u_n(0)).$$
One can pass to the limit in the right-hand side thanks to \eqref{eq:ws3}. 
For the left-hand side, we write 
$$\int_0^t\bigl(\partial_\tau u_n\,|\, \bar u_n\bigr)\,d\tau 
- \int_0^t\bigl(\partial_\tau u\,|\, \bar u\bigr)\,d\tau=
\int_0^t\bigl(\partial_\tau u\,|\, (\bar u_n-\bar u)\bigr)\,d\tau
+\int_0^t(\partial_\tau(u_n-u)\,|\,\bar u_n)\,d\tau.$$
$$\int_0^t\bigl(u_n\,|\, \partial_\tau\bar u_n\bigr)\,d\tau 
- \int_0^t\bigl(u\,|\, \partial_\tau \bar u\bigr)\,d\tau=\int_0^t((u_n-u)\,|\,\partial_\tau\bar u)\,d\tau
+\int_0^t(u_n\,|\, \partial_\tau(\bar u_n-\bar u))\,d\tau.$$
We already proved that the first terms of the right-hand side converge to $0.$
For the second ones, this is due to \eqref{eq:ws4},\eqref{eq:ws2} and to the fact that 
$(\bar u_n)_{n\in\N}$ and $(u_n)_{n\in\N}$
are bounded in $L^2_T(\dot H^1)$ and $L^4_T(\dot H^1),$ respectively.
This proves \eqref{eq:ws8a}. 
\medbreak
In order to prove that 
\begin{equation}\label{eq:ws8b}
\int_0^t \Bigl((\partial_\tau B\,|\, \bar B)+(\partial_\tau \bar B\,|\, B)\Bigr)\,d\tau=(B(t)\,|\,\bar B(t))-\|B_0\|_{L^2}^2,\end{equation}
we start from the fact that 
$$\int_0^t\bigl((\partial_\tau B_n\,|\, \bar B_n)+(B_n\,|\,\partial_\tau\bar B_n)\bigr)\,d\tau
=(B_n(t)\,|\,\bar B_n(t))- (B_n(0)\,|\bar B_n(0)).$$
Passing to the limit in the right-hand side may be done thanks to 
 \eqref{eq:ws3}. 
For the left-hand side, we write 
$$\int_0^t\bigl(\partial_\tau B_n\,|\, \bar B_n\bigr)d\tau 
- \int_0^t\bigl(\partial_\tau B\,|\, \bar B\bigr)d\tau=
\int_0^t\bigl(\partial_\tau B\,|\, (\bar B_n-\bar B)\bigr)d\tau
+\int_0^t(\partial_\tau(B_n-B)\,|\,\bar B_n)\,d\tau,$$
$$\int_0^t\bigl(B_n\,|\, \partial_\tau\bar B_n\bigr)d\tau 
- \int_0^t\bigl(B\,|\, \partial_\tau \bar B\bigr)d\tau=\int_0^t((B_n-B)\,|\,\partial_\tau\bar B)d\tau
+\int_0^t(B_n\,|\, \partial_\tau(\bar B_n-\bar B))d\tau.$$
The convergence of the first terms of the right-hand side has already been shown before, 
and that of the second terms is due to the boundedness of $(B_n)_{n\in\N}$
and $(\bar B_n)_{n\in\N}$ in $L^4_T(\dot H^2)$ and $L^2_T(\dot H^1),$ respectively, 
and to \eqref{eq:ws4},\eqref{eq:ws2}. \medbreak
To conclude the proof of the lemma, one has to notice that
\begin{equation}\label{eq:ws9}
\int_0^t \bigl((u\cdot\nabla u\,|\, \bar u)+(\bar u\cdot\nabla\bar u\,|\, u)\bigr)\,d\tau=-\int_0^t (\overline{\du}
\cdot\nabla u\,|\,\overline{\du})\,d\tau,\end{equation}
\begin{equation}\label{eq:ws10}\int_0^t \bigl((u\cdot\nabla B\,|\, \bar B)+(\bar u\cdot\nabla\bar B\,|\,B)\bigr)\,d\tau=-\int_0^t (\overline{\du}\cdot\nabla\bar B\,|\, \overline{\dB})\,d\tau,\end{equation}
\begin{multline}\label{eq:ws11}
\int_0^t \Bigl((B\cdot\nabla B\,|\, \bar u)+(\bar B\cdot\nabla\bar u\,|\, B)+(\bar B\cdot\nabla\bar B\,|\, u)+(B\cdot\nabla u\,|\, \bar B)\Bigr)\,d\tau
\\=-\int_0^t \Bigl((\overline{\delta B}\cdot\nabla B\,|\, \overline{\du})+(\overline{\dB}\cdot\nabla u\,|\, \overline{\dB})\Bigr)\,d\tau,
\end{multline}
\begin{equation}\label{eq:ws12}
\int_0^t \!\Bigl((\nabla\!\times\!(J\!\times\! B)\,|\, \bar B)+(\nabla\!\times\!(\bar J\!\times\! \bar B)\,|\, B)\Bigr)d\tau
=\int_0^t \!(\nabla\times(\overline{\dJ}\times\overline{\dB})\,|\,B)\,d\tau.
\end{equation}
The above relations  are obvious in the case of smooth vector-fields
(just perform suitable integrations by parts and use the divergence-free property). 
Now, since the trilinear form corresponding to \eqref{Le_P1} is continuous 
on the spaces $$L^4_T(L^3)\times L^2_T(\dot H^1)\times L^4_T(L^6)
\andf L^4_T(L^3)\times L^4_T(\dot H^1)\times L^2_T(L^6),$$
we deduce that \eqref{eq:ws9}, \eqref{eq:ws10} and \eqref{eq:ws11} are still valid under our assumptions.

For justifying \eqref{eq:ws12} under our regularity framework, one just has to use the fact
that the trilinear form $(a,b,c)\mapsto \int_0^t (\nabla\times(a\times b)\,|\,c)\,d\tau$
is continuous on 
$$L^4_T(L^3)\times L^4_T(L^6)\times L^2_T(\dot  H^1)
\andf L^2_T(L^2)\times L^4_T(L^3)\times L^4_T(\dot  H^2)$$
and thus on  
$$
L^4_T(\dot H^{\frac12})\times L^4_T(\dot H^1)\times L^2_T(\dot  H^1)
\andf L^2_T(L^2)\times L^4_T(\dot H^{\frac12})\times L^4_T(\dot  H^2).$$
This completes the proof of Lemma \ref{L_4.2}.
\end{proof}
\medbreak
Let us finish  the proof of the  theorem.
Reformulating the right-hand side of  \eqref{equni} by means   Lemma \ref{L_4.2},  we get
\begin{multline}\label{eq:ws5}
\|(\overline{\du}, \overline{\dB})(t)\|_{L^2}^2+2\mu\int_0^t \|\nabla \overline{\du}\|_{L^2}^2\,d\tau+2\nu\int_0^t \|\nabla\overline{\dB}\|_{L^2}^2\,d\tau\\
\leq 2\int_0^t \Bigl((\overline{\dB}\cdot\nabla B\,|\,\overline{\du})+(\overline{\delta B}\cdot\nabla u\,|\,\overline{\dB})+\eps (\nabla\times((\nabla\times\overline{\dB})\times \overline{\dB})\,|\,B)\\
-(\overline{\du}\cdot\nabla B\,|\,\overline{\dB})-(\overline{\du}\cdot\nabla u\,|\,\overline{\du})\Bigr)\,d\tau.\end{multline}
Arguing as in  the proof of Lemma \ref{Le_P} and using  Young's inequality, we see  that
\begin{align*}
|(\overline{\dB}\cdot\nabla B\,|\,\overline{\delta u})|&\lesssim\|\overline{\dB}\|_{L^2}^\frac12\|\overline{\dB}\|_{\dot{H}^1}^\frac12\|B\|_{\dot{H}^1}\|\overline{\du}\|_{\dot{H}^1}\\
&\leq\frac{C}{\mu^2\nu}\|\overline{\dB}\|_{L^2}^2\|B\|_{\dot{H}^1}^4+\frac{\mu}{8}\|\overline{\delta u}\|_{\dot{H}^1}^2+\frac{\nu}{8}\|\overline{\dB}\|_{\dot{H}^1}^2,
\end{align*}
$$\begin{aligned}
|(\overline{\dB}\cdot\nabla u\,|\,\overline{\dB})|
&\leq\frac{C}{\nu^3}\|\overline{\dB}\|_{L^2}^2\|u\|_{\dot{H}^1}^4+\frac{\nu}{4}\|\overline{\dB}\|_{\dot{H}^1}^2,\\
|(\overline{\du}\cdot\nabla B\,|\,\overline{\dB})|
&\leq\frac{C}{\mu\nu^2}\|\overline{\du}\|_{L^2}^2\|B\|_{\dot{H}^1}^4+\frac{\mu}{8}\|\overline{\du}\|_{\dot{H}^1}^2+\frac{\nu}{8}\|\overline{\dB}\|_{\dot{H}^1}^2,\\
|(\overline{\du}\cdot\nabla u\,|\,\overline{\du})|
&\leq\frac{C}{\mu^3}\|\overline{\du}\|_{L^2}^2\|u\|_{\dot{H}^1}^4+\frac{\mu}{4}\|\overline{\du}\|_{\dot{H}^1}^2,
\end{aligned}$$
and
\begin{align*}
\eps|(\nabla\times((\nabla\times\overline{\dB})\times \overline{\dB})\,|\,B)|&\leq\eps \|\overline{\dB}\|_{L^2}^\frac12\|\overline{\delta B}\|_{\dot{H}^1}^\frac32\|\nabla\times B\|_{\dot{H}^1}\\
&\leq \frac{C\eps^4}{\nu^3}\|\overline{\dB}\|_{L^2}^2\|J\|_{\dot{H}^1}^4+\frac{\nu}{4}\|\overline{\dB}\|_{\dot{H}^1}^2.
\end{align*}
Thus, reverting to \eqref{eq:ws5}, we conclude that, for all $t\leq T,$
$$\displaylines{\quad
\|(\overline{\du}, \overline{\dB})(t)\|_{L^2}^2+\mu\int_0^t \|\nabla \overline{\du}\|_{L^2}^2\,d\tau+\nu\int_0^t \|\nabla\overline{\dB}\|_{L^2}^2\,d\tau\hfill\cr\hfill
\leq \frac{C\eps^4}{\min\{\mu, \nu\}^3}\int_0^t \|(\overline{\dB}, \overline{\du})\|_{L^2}^2\|(u, B, J)\|_{\dot{H}^1}^4\,d\tau,}$$
which, by Gronwall lemma, implies  that $(\overline{\du}, \overline{\dB})\equiv 0$ on $[0, T].$ 
\qed


\section{A global existence result for   \texorpdfstring{$2\frac12$D}{TEXT} flows}\label{s:21/2}

This section is devoted to the proof of Theorem \ref{Th_4}.
It essentially relies  on the following proposition 
and on an inequality for the vector-field $E$ defined in the introduction, 
that will proved at the end of the section. 
\begin{proposition}\label{p:sob2}
Let $(u, B)$ be a smooth solution of \eqref{1.1a}--\eqref{1.3a} with $\eps=\mu=\nu=1.$
Let $v:=u-j.$  
Then, we have 
\begin{equation}\label{6.1000}
\frac12\frac{d}{dt}\|(u, B)\|_{L^2}^2+\|(\wt\nabla u, \wt\nabla B)\|_{L^2}^2 = 0,
\end{equation}
 and  there exists a universal constant $C$ such that
\begin{equation}\label{6.11} 
\frac d{dt}\|v\|_{L^2}^2+\|\wt\nabla v\|_{L^2}^2\leq 
C\|(u, B, v)\|_{L^2}^2\|(\wt\nabla u, \wt\nabla B,\wt\nabla v)\|_{L^2}^2,
\end{equation}
\begin{equation}\label{6.15000}
\frac{d}{dt}\|B\|_{H^1}^2+\|\wt\nabla B\|_{H^1}^2
\leq C W\|B\|_{H^1}^2+C\|\wt\nabla B\|_{L^2}\|\wt\Delta B\|_{L^2}^2,
\end{equation}
with  $W(t):=\|u(t)\|_{L^2}^2\|\wt\nabla u(t)\|_{L^2}^2+ \|\wt\nabla u(t)\|_{L^2} \|\wt\nabla^2 u(t)\|_{L^2}.$
\end{proposition}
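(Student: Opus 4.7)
My plan is to derive each of the three inequalities by a separate energy argument, in each case exploiting a cancellation built into the Hall-MHD structure.

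For the $L^2$ energy equality \eqref{6.1000}, I would take the $L^2$ scalar product of \eqref{1.1a} with $u$ and of \eqref{1.3a} with $B$, and add the results. The pressure and transport terms vanish by the two-dimensional incompressibility of $\wt u$; the magnetic coupling terms $(\wt B\cdot\wt\nabla B\,|\,u)$ and $(\wt B\cdot\wt\nabla u\,|\,B)$ cancel after one integration by parts, using $\widetilde{\div}\wt B=0$. The Hall contribution reduces, upon rewriting the bracket as $\wt B\cdot\wt\nabla j-\wt j\cdot\wt\nabla B=\wt\nabla\times(j\times B)$ and applying \eqref{v1}, to $\eps(j\times B\,|\,\wt\nabla\times B)=\eps(j\times B\,|\,j)=0$, thanks to the orthogonality $a\cdot(a\times b)=0$.

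For \eqref{6.11}, I would first derive the $2\tfrac12$D analogue of the $v$-equation appearing in \eqref{1.100} by applying $\wt\nabla\times$ to \eqref{1.3a} and subtracting from \eqref{1.1a} (with $\eps=\mu=\nu=1$). The central feature is the term $-\wt\nabla\times((\wt\nabla\times v)\times B)$, whose contribution to the $L^2$ estimate on~$v$ vanishes identically because $(\wt\nabla\times v\,|\,(\wt\nabla\times v)\times B)=0$. For the remaining quadratic terms (all bilinear in $(u,B,v)$ with one derivative), I would integrate by parts to place the derivative onto~$v$, then apply the 2D Ladyzhenskaya inequality $\|f\|_{L^4}^2\lesssim\|f\|_{L^2}\|\wt\nabla f\|_{L^2}$ to each product in $L^4\times L^4\times L^2$. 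Young's inequality then converts each contribution into the sum of a small multiple of $\|\wt\nabla v\|_{L^2}^2$ (to be absorbed on the left-hand side) and a quantity bounded by $\|(u,B,v)\|_{L^2}^2\|(\wt\nabla u,\wt\nabla B,\wt\nabla v)\|_{L^2}^2$, which yields \eqref{6.11}.

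For \eqref{6.15000}, I take the $L^2$ scalar product of \eqref{1.3a} with $-\wt\Delta B$; combined with \eqref{6.1000}, this governs the evolution of $\|B\|_{H^1}^2$ with dissipation $\|\wt\nabla B\|_{H^1}^2$. The transport term $\wt u\cdot\wt\nabla B$ and the stretching term $\wt B\cdot\wt\nabla u$ are handled by one integration by parts followed by 2D Ladyzhenskaya and Young inequalities; a suitable splitting of exponents produces precisely the two contributions $\|u\|_{L^2}^2\|\wt\nabla u\|_{L^2}^2\|B\|_{H^1}^2$ and $\|\wt\nabla u\|_{L^2}\|\wt\nabla^2 u\|_{L^2}\|B\|_{H^1}^2$ that sum to $CW\|B\|_{H^1}^2$, up to an absorbable fraction of $\|\wt\Delta B\|_{L^2}^2$. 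The main obstacle is the Hall contribution: a direct H\"older estimate on $(-\wt\Delta B\,|\,\wt B\cdot\wt\nabla j-\wt j\cdot\wt\nabla B)$ would require an $H^3$ control on~$B$, which we do not have. To circumvent this, I would combine the two Hall terms as $\wt\nabla\times(j\times B)$ and use \eqref{v1} together with the 2D identity $\wt\nabla\times\wt\Delta B=\wt\Delta j$ to recast the contribution as $-\eps(\wt\Delta j\,|\,j\times B)$. One further integration by parts gives $\eps\sum_k(\partial_k j\,|\,\partial_k(j\times B))$; the piece $\sum_k(\partial_k j\,|\,\partial_k j\times B)$ vanishes by orthogonality, while the surviving $\sum_k(\partial_k j\,|\,j\times\partial_k B)$ is dominated by $\|\wt\nabla j\|_{L^2}\|j\|_{L^4}\|\wt\nabla B\|_{L^4}\lesssim\|\wt\nabla B\|_{L^2}\|\wt\Delta B\|_{L^2}^2$, producing precisely the last term in \eqref{6.15000}.
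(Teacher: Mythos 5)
Your strategy for \eqref{6.1000} and \eqref{6.11} follows the same structural cancellations as the paper, and your treatment of the Hall term in \eqref{6.15000} is a genuinely different but valid route: you rewrite its contribution as $-(\wt\Delta j\,|\,j\times B)$, integrate by parts once, and kill $\sum_k(\partial_k j\,|\,\partial_k j\times B)$ by orthogonality, whereas the paper expands $\wt\nabla\times(j\times B)$ via the double cross-product identity, uses $(\wt\Delta B\times B\,|\,\wt\Delta B)=0$ and the vanishing of $(\wt\nabla(j\cdot B)\,|\,\wt\Delta B)$, and then estimates the residual term $\wt j\cdot\wt\nabla B$ directly. Both yield the same bound $\|\wt\nabla B\|_{L^2}\|\wt\Delta B\|_{L^2}^2$, and yours is arguably the tidier computation.

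Two steps, however, would fail as written. First, in \eqref{6.11} the term $2\wt\nabla\times(\wt v\cdot\wt\nabla B)$ contributes $2(\wt v\cdot\wt\nabla B\,|\,\wt\nabla\times v)$, and applying the Ladyzhenskaya inequality to the factor $\wt\nabla B$ in $L^4$ produces $\|\wt\nabla B\|_{L^2}^{1/2}\|\wt\nabla^2 B\|_{L^2}^{1/2}$; since $\|\wt\nabla^2 B\|_{L^2}$ does not appear on the right-hand side of \eqref{6.11}, Young's inequality cannot absorb it. The missing idea is the paper's observation that $B=(-\wt\Delta)^{-1}\wt\nabla\times(u-v)$, so $\wt\nabla B$ is a zeroth-order multiplier applied to $u-v$, whence $\|\wt\nabla B\|_{L^4}\lesssim\|u\|_{L^4}+\|v\|_{L^4}$, and Ladyzhenskaya is then applied to $u$ and $v$ themselves. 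Second, the $L^2$ part of $\|B\|_{H^1}^2$ in \eqref{6.15000} is not ``governed by \eqref{6.1000}'': that identity controls $\|u\|_{L^2}^2+\|B\|_{L^2}^2$ jointly and yields no differential inequality for $\|B\|_{L^2}^2$ alone of the required type. You must test the $B$-equation with $B$ separately; the Hall term indeed drops out there (again $(j\times B\,|\,j)=0$), but the stretching term survives as $(u\times B\,|\,j)$ and must be estimated by $C\|u\|_{L^2}^2\|\wt\nabla u\|_{L^2}^2\|B\|_{L^2}^2+\frac18\|\wt\nabla B\|_{L^2}^2$, which is precisely where the first piece of $W$ enters at the $L^2$ level. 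Both repairs are exactly what the paper's proof does.
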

\begin{proof}
The first identity is just the energy balance. 
For proving \eqref{6.11}, we use the fact that the third equation of \eqref{1.100} rewrites for 2$\frac12$D flows a follows:
$$
\partial_tv-\wt\Delta v=\cP(\wt B\cdot\wt\nabla B-\wt u\cdot\wt\nabla u)
-\wt\nabla\times\bigl((\wt\nabla \times v)\times B\bigr)+\wt \nabla\times(v\times u)
+2\wt \nabla\times (\wt v\cdot\wt \nabla B).$$
Therefore, taking the $L^2$ scalar product with $v,$ integrating by parts in some terms, 
and using Cauchy-Schwarz inequality, one gets:
$$\frac12\frac d{dt}\|v\|_{L^2}^2+\|\wt\nabla v\|_{L^2}^2\leq 
\bigl(\|B\otimes B-u\otimes u\|_{L^2}+\|v\times u\|_{L^2}+2\| v\cdot\wt\nabla B\|_{L^2}\bigr)
\|\wt\nabla v\|_{L^2}.$$
Thanks to H\"older inequality, Sobolev embedding \eqref{em}, interpolation inequality \eqref{i1} and Young's inequality, we have
$$\begin{aligned}
\|B\otimes B\|_{L^2}\|\wt\nabla v\|_{L^2}&\leq \|B\|_{L^4}^2\|\wt\nabla v\|_{L^2}\\
&\leq C\|B\|_{L^2}\|\wt\nabla B\|_{L^2}\|\wt\nabla v\|_{L^2}\\
&\leq C\|B\|_{L^2}^2\|\wt\nabla B\|_{L^2}^2+\frac18\|\wt\nabla v\|_{L^2}^2,\end{aligned}$$
 a similar inequality for the term with $u\otimes u,$ 
$$\begin{aligned}
\|v\times u\|_{L^2}\|\wt\nabla v\|_{L^2}&\leq \|v\|_{L^4}\|u\|_{L^4}\|\wt\nabla v\|_{L^2}\\
&\leq C \|u\|_{L^2}\|v\|_{L^2}\|\wt\nabla u\|_{L^2}\|\wt\nabla v\|_{L^2} +\frac18 \|\wt\nabla v\|_{L^2}^2\\
&\leq C \|(u, v)\|_{L^2}^2\|(\wt\nabla u, \wt\nabla v)\|_{L^2}^2 +\frac18 \|\wt\nabla v\|_{L^2}^2,\end{aligned}$$
and, using  that $B=(-\wt\Delta)^{-1}\wt\nabla\times(u-v)$ and that $\wt\nabla^2(-\wt\Delta)^{-1}$ maps
$L^4$ to $L^4,$ 
$$\begin{aligned}
\|\wt v\cdot\wt\nabla B\|_{L^2}\|\wt\nabla v\|_{L^2}&\leq C\|v\|_{L^4}\|u-v\|_{L^4}\|\wt\nabla v\|_{L^2}\\
&\leq C\|v\|_{L^4}(\|u\|_{L^4}+\|v\|_{L^4})\|\wt\nabla v\|_{L^2}\\
&\leq C\|(u, v)\|_{L^2}^2\|(\wt\nabla u, \wt\nabla v)\|_{L^2}^2 +\frac14 \|\wt\nabla v\|_{L^2}^2.\end{aligned}$$
This yields \eqref{6.11}. 
\medbreak
For proving \eqref{6.15000},   use the following identity (valid if $\wt{\div}\wt y=\wt{\div}\wt z=0$):
\begin{equation}\label{I1}
\wt\nabla\times (y\times z)=\wt z\cdot\wt\nabla y-\wt y\cdot\wt\nabla z,
\end{equation}
to rewrite the equation for $B$ as follows: 
\begin{equation}\partial_t{\mathnormal B}-\wt\nabla\times(u\times B)+\eps\wt\nabla\times( j\times B)=\nu\wt\Delta B. \label{1.3b}
\end{equation}
Taking  the $L^2$ scalar product with $B$ yields
\begin{align*}
\frac12\frac{d}{dt}\|B\|_{L^2}^2+\|\wt\nabla B\|_{L^2}^2=(u\times B\,|\, j).
\end{align*}
To get an estimate for $\wt\nabla B,$  apply the following relations:  
$$\begin{aligned}
&\wt\nabla\times (y\times z)=(\wt\nabla\times y)\times z+(\wt\nabla\times z)\times y-2\wt y\cdot\wt\nabla z+\wt\nabla(y\cdot z),\\
&\wt\nabla\times(\wt\nabla\times y)+\wt\Delta y=0,\end{aligned}$$
so as to rewrite \eqref{1.3b} as
\begin{equation*}
\partial_tB+\wt u\cdot\wt\nabla B-\wt B\cdot\wt\nabla u-\wt\Delta B\times B-2\wt j\cdot\wt\nabla B+\wt\nabla(j\cdot B)=\wt\Delta B.
\end{equation*}
Taking the $L^2$ scalar product with $\wt\Delta B$  and using the fact that
$\wt\div\wt\Delta B=0,$  we get
\begin{equation*}
\frac12\frac{d}{dt}\|\wt\nabla B\|_{L^2}^2+\|\wt\Delta B\|_{L^2}^2=
(\wt B\cdot\wt\nabla u\,|\, \wt\Delta B)-
(\wt u\cdot\wt\nabla B\,|\,\wt\Delta B)-2(\wt j\cdot\wt\nabla B\,|\,\wt\Delta B).
\end{equation*}
Thanks to H\"older inequality,  \eqref{i1},  \eqref{em} and Young's inequality, we have
\begin{align*}
|(u\times B\,|\, j)|&\leq \|u\times B\|_{L^2}\|j\|_{L^2}\\
&\leq \|u\|_{L^4}\|B\|_{L^4}\|\wt\nabla B\|_{L^2}\\
&\leq C\|u\|_{L^2}^\frac12\|u\|_{\dot{H}^1}^\frac12\|B\|_{L^2}^\frac12\|\wt\nabla B\|_{L^2}^\frac32\\
&\leq C\|u\|_{L^2}^2\|\wt\nabla u\|_{L^2}^2\|B\|_{L^2}^2+\frac18\|\wt\nabla B\|_{L^2}^2\\
|(\wt u\cdot\wt\nabla B\,|\, \wt\Delta B)|
&\leq \|u\|_{L^4}\|\wt\nabla B\|_{L^4}\|\wt\Delta B\|_{L^2}\\
&\leq C\|u\|_{L^2}^\frac12\|u\|_{\dot{H}^1}^\frac12\|\wt\nabla B\|_{L^2}^\frac12\|\wt\nabla B\|_{\dot{H^1}}^\frac12\|\wt\Delta B\|_{L^2}\\
&\leq C\|u\|_{L^2}^2\|\wt\nabla u\|_{L^2}^2\|\wt\nabla B\|_{L^2}^2+\frac18\|\wt\Delta B\|_{L^2}^2,\\
|(\wt B\cdot\wt\nabla u\,|\, \wt\Delta B)|
&\leq \|B\|_{L^4}\|\wt\nabla u\|_{L^4}\|\wt\Delta B\|_{L^2}\\
&\leq C\|B\|_{L^2}^\frac12\|\wt\nabla B\|_{L^2}^\frac12\|\wt\nabla u\|_{L^2}^\frac12\|\wt\nabla^2 u\|_{L^2}^\frac12\|\wt\Delta B\|_{L^2}\\
&\leq C(\|B\|_{L^2}^2+\|\wt\nabla B\|_{L^2}^2)\|\wt\nabla u\|_{L^2}\|\wt\nabla^2 u\|_{L^2}
+\frac18\|\wt\Delta B\|_{L^2}^2,\\
|(\wt j\cdot\wt\nabla B\,|\,\wt\Delta B)|&\leq \|\wt j\cdot\wt\nabla B\|_{L^2}\|\wt\Delta B\|_{L^2}\\
&\leq \|j\|_{L^4}\|\wt\nabla B\|_{L^4}\|\wt\Delta B\|_{L^2}\\
&\leq C \|\wt\nabla B\|_{L^2} \|\wt\Delta B\|_{L^2}^2.
\end{align*}
Summing up the above estimates together yields \eqref{6.15000}.
\end{proof}
\medbreak
It is now easy to prove the first part of Theorem \ref{Th_4}:
adding up  \eqref{6.1000} and  \eqref{6.11}
yields  for some universal constant $C$ and all $t\geq0:$
$$\displaylines{\quad\|(u,B,v)(t)\|_{L^2}^2+\int_0^t\|(\wt\nabla u,\wt\nabla B,\wt\nabla v)\|_{L^2}^2\,d\tau
\hfill\cr\hfill\leq \|(u_0,B_0,v_0)\|_{L^2}^2+C\int_0^t\|(u,B,v)\|_{L^2}^2\|(\wt\nabla u,\wt\nabla B,\wt\nabla v)\|_{L^2}^2\,d\tau.\quad}$$
Lemma \ref{Le_7.000} (take $\alpha=2$, $W\equiv 0$) thus implies that 
if $ 2C\|(u_0,B_0,v_0)\|_{L^2}^2<1,$
then  we have for all time,  
\begin{equation}\label{eq:global}\|(u,B,v)(t)\|_{L^2}^2+\frac12\int_0^t\|(\wt\nabla u,\wt\nabla B,\wt\nabla v)\|_{L^2}^2\,d\tau\leq \|(u_0,B_0,v_0)\|_{L^2}^2.\end{equation}
{}From that stage, applying a regularization scheme similar to the one that
we used for handling the 3D case allows to conclude to the first 
part of Theorem \ref{Th_4} (uniqueness being also similar). 
\medbreak
In order to prove  the second part of the statement,   we observe that Inequality \eqref{6.15000} reads
$$\displaylines{
\frac d{dt}X^2+D^2\leq CXW +CXD^2\!\with\! 
X(t)=\|B(t)\|_{H^1},\!\quad\! D^2(t)=\int_0^t\|\wt\nabla B\|_{H^1}^2\,d\tau\hfill\cr\hfill
\andf W(t)=\|u(t)\|_{L^2}^2\|\wt\nabla u(t)\|_{L^2}^2+\|\wt\nabla u(t)\|_{L^2}\|\wt\nabla^2 u(t)\|_{L^2}.}$$
The first term of $W$ may be bounded thanks to  \eqref{energyineq}. 
To handle the second one, the idea is to get a bound for 
$\omega$ (that is the curl of $u$) through the identity \eqref{eq:E}. 
More precisely, taking  the scalar product
of \eqref{eq:E} with $E$ and integrating by parts,  we get (remember that $\mu=1$):
\begin{equation*}
\frac{1}{2}\frac{d}{dt}\|E\|_{L^2}^2+\|\wt\nabla E\|_{L^2}^2+(u\otimes \wt E\,|\, \wt\nabla E)=0.
\end{equation*}
Combining  H\"older and Gagliardo-Nirenberg inequalities  thus yields
\begin{align*}
\frac{1}{2}\frac{d}{dt}\|E\|_{L^2}^2+\|\wt\nabla E\|_{L^2}^2&\leq \|\wt E\|_{L^4}\|u\|_{L^4}\|\wt\nabla E\|_{L^2}\\
&\leq C\|E\|_{L^2}^\frac12\|u\|_{L^2}^\frac12\|\nabla u\|_{L^2}^\frac12\|\wt\nabla E\|_{L^2}^\frac32\\
&\leq  C\|E\|_{L^2}^2\|u\|_{L^2}^2\|\nabla u\|_{L^2}^2+\frac12\|\wt\nabla E\|_{L^2}^2.
\end{align*}
Taking advantage of  \eqref{energyineq} and using Gronwall lemma, we thus get
\begin{equation*}
\|E(t)\|_{L^2}^2
+\int_0^t\|\wt\nabla E\|_{L^2}^2\,d\tau\leq
\|E_0\|_{L^2}^2\exp\Bigl(C\|(u_0, B_0)\|_{L^2}^4\Bigr)\cdotp
\end{equation*} 
Since $\omega=E-B,$
using  again \eqref{energyineq}  eventually yields
\begin{equation}\label{energyomega}
\|\omega(t)\|_{L^2}^2+\int_0^t \|\wt\nabla \omega\|_{L^2}^2\,d\tau
\leq 2\|(\omega_0, B_0)\|_{L^2}^2\Bigl(1+\exp\Bigl(C\|(u_0, B_0)\|_{L^2}^4\Bigr)\Bigr)\cdotp\end{equation} 
Now, bounding $W$ according to  the energy balance \eqref{6.1000} and Inequality \eqref{energyomega}, 
and using Lemma \ref{Le_7.000} (take $\alpha=1$), one can conclude that, if 
$$C\|B_0\|_{H^1}\exp\Bigl(\|(u_0,B_0)\|_{L^2}^2 + (\|(u_0,B_0)\|_{L^2}^4+\|\omega_0+B_0\|_{L^2}^2\exp\bigl(C\|(u_0,B_0)\|_{L^2}^4\bigr)\Bigr)<1$$
then we have for all $t\geq0,$
$$\|B(t)\|_{H^1}^2+\int_0^t\|\nabla B\|_{H^1}^2\,d\tau \leq 1.$$
From that latter inequality,  \eqref{energyineq} and Inequality \eqref{energyomega}, 
one can work out a regularization procedure similar to that of Section \ref{s:trois}
and  complete the proof of the second part of Theorem \ref{Th_4}.\qed



\appendix
\section{}
For the reader's convenience, we here recall a few results  
that have been used repeatedly in the paper. Let us first recall
the definitions of Sobolev spaces and fractional derivation operators.
\begin{definition}
Let $s$ be in $\mathbb{R}.$ The \emph{homogeneous Sobolev space $\dot{H}^s(\mathbb{R}^d)$} (also denoted by $\dot{H}^s$) is the set of tempered distributions $u$ on $\mathbb{R}^d,$ with  Fourier transform in  $L^1_{loc}(\mathbb{R}^d),$  satisfying
$$\|u\|_{\dot{H}^s}:=\|\La^s u\|_{L^2}<\infty,$$
where   $\Lambda^s$  stands for  the fractional derivative  operator defined in terms of the Fourier transform by
\begin{equation*}
\mathcal F(\Lambda^s u)(\xi):=|\xi|^s\mathcal F u(\xi),\qquad \xi\in\R^d.
\end{equation*}
 The \emph{nonhomogeneous Sobolev space ${H}^s(\mathbb{R}^d)$} (also denoted by ${H}^s$) is the set of tempered distributions $u$ on $\mathbb{R}^d,$ with  Fourier transform in  $L^1_{loc}(\mathbb{R}^d),$  satisfying
$$\|u\|_{{H}^s}:=\|\langle D\rangle^s u\|_{L^2}<\infty\with 
\mathcal F(\langle D\rangle^s u)(\xi):=(1+|\xi|^2)^{s/2}\mathcal F u(\xi).$$
\end{definition}
We have the following proposition.
\begin{proposition}
Let $s_0\leq s\leq s_1.$ Then, $\dot{H}^{s_0}\cap\dot{H}^{s_1}$ is included in $\dot{H}^s,$ and we have
for all $\theta$ in $[0,1],$
\begin{equation}{\label{i1}}
\|u\|_{\dot{H}^s}\leq\|u\|_{\dot{H}^{s_0}}^{1-\theta}\|u\|_{\dot{H}^{s_1}}^{\theta}\quad{\rm{with}}\quad s=(1-\theta)s_0+\theta s_1.
\end{equation}
\end{proposition}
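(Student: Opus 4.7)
The plan is to prove the interpolation inequality via Plancherel's identity combined with Hölder's inequality applied on the Fourier side. The strategy exploits the fact that the $\dot H^s$ norm is a weighted $L^2$ norm of $\widehat u$, with weight $|\xi|^s$, so interpolation in $s$ becomes interpolation between two weights.

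First, I would use the definition to rewrite
\begin{equation*}
\|u\|_{\dot H^s}^2=\int_{\R^d}|\xi|^{2s}|\widehat u(\xi)|^2\,d\xi.
\end{equation*}
Since $s=(1-\theta)s_0+\theta s_1$, I would split the weight and the modulus of $\widehat u$ in a compatible way:
\begin{equation*}
|\xi|^{2s}|\widehat u(\xi)|^2=\bigl(|\xi|^{2s_0}|\widehat u(\xi)|^2\bigr)^{1-\theta}\bigl(|\xi|^{2s_1}|\widehat u(\xi)|^2\bigr)^{\theta}.
\end{equation*}

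Next, assuming $\theta\in(0,1)$ (the endpoint cases $\theta=0$ and $\theta=1$ being trivial), I would apply Hölder's inequality with conjugate exponents $p=1/(1-\theta)$ and $p'=1/\theta$ to the product of the two nonnegative functions $|\xi|^{2s_0}|\widehat u|^2$ and $|\xi|^{2s_1}|\widehat u|^2$, raised to the powers $1-\theta$ and $\theta$, respectively. This yields
\begin{equation*}
\int_{\R^d}|\xi|^{2s}|\widehat u(\xi)|^2\,d\xi\leq\Bigl(\int_{\R^d}|\xi|^{2s_0}|\widehat u(\xi)|^2\,d\xi\Bigr)^{1-\theta}\Bigl(\int_{\R^d}|\xi|^{2s_1}|\widehat u(\xi)|^2\,d\xi\Bigr)^{\theta},
\end{equation*}
that is, $\|u\|_{\dot H^s}^2\leq\|u\|_{\dot H^{s_0}}^{2(1-\theta)}\|u\|_{\dot H^{s_1}}^{2\theta}$. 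Taking square roots gives \eqref{i1}, which in particular shows that $\dot H^{s_0}\cap\dot H^{s_1}\subset\dot H^s$.

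There is essentially no obstacle here: the only minor point to check is that when $u\in\dot H^{s_0}\cap\dot H^{s_1}$, the function $\widehat u$ is locally integrable (so the definition of $\dot H^s$ applies), and the integrand $|\xi|^{2s}|\widehat u(\xi)|^2$ is measurable and nonnegative, so Hölder's inequality applies without integrability issues (if the right-hand side is infinite the bound is vacuous). The endpoint cases $\theta=0,1$ reduce to equality, and the argument is independent of the dimension $d$ and the sign of $s_0,s_1$.
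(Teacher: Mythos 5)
Your proof is correct and is the standard argument: Plancherel, the weight decomposition $|\xi|^{2s}=\bigl(|\xi|^{2s_0}\bigr)^{1-\theta}\bigl(|\xi|^{2s_1}\bigr)^{\theta}$, and H\"older with exponents $1/(1-\theta)$ and $1/\theta$ on the Fourier side. The paper states this proposition in the appendix as a classical fact without proof, and your argument is precisely the canonical one it implicitly relies on; the remarks on the trivial endpoints and on local integrability of $\widehat u$ are the right points to check and are handled correctly.
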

We also often used the following Sobolev embedding for  $0\leq s<d/2$:
\begin{equation}\label{em} \dot{H}^{s}(\mathbb{R}^d)\hookrightarrow L^{\frac{2d}{d-2s}}(\mathbb{R}^d)
\end{equation}
and the  Gagliardo-Nirenberg inequalities:
\begin{eqnarray}\label{eq:GN1} &&\|u\|_{L^p(\R^2)}\lesssim \|u\|_{L^2(\R^2)}^{\frac2p} \|\nabla u\|_{L^2(\R^2)}^{1-\frac2p},\quad
2\leq p<\infty,\\\label{eq:GN2}
&&\|u\|_{L^\infty(\R^3)}\lesssim \|u\|_{\dot H^s(\R^3)}^{1-\theta} \|u\|_{\dot H^{s'}(\R^3)}^\theta, \quad s<\frac32<s',\quad
\theta=\frac{\frac32-s}{s'-s}\cdotp\end{eqnarray}
Finally, we used the following inequalities (see e.g. \cite{Ke91}, Lemma 2.10):
\begin{lemma}\label{Le_28}
Let~$s>0$ and   $1< p, \, p_1,\, p_2,\, p_3,\, p_4 <\infty$ satisfying~$\frac{1}{p}=\frac{1}{p_1}+\frac{1}{p_2}=\frac{1}{p_3}+\frac{1}{p_4}\cdotp$
 There exists a constant $C>0$ such that
\begin{equation}
\|\Lambda^s (fg)-f\Lambda^s g\|_{L^p}\leq C(\|\nabla f \|_{L^{p_1}}\|\Lambda^{s-1}g\|_{L^{p_2}}+\|\Lambda^s f\|_{L^{p_3}}\|g\|_{L^{p_4}})
\end{equation}
{\rm{and}}
\begin{equation}
\|\Lambda^s (fg)\|_{L^p}\leq C(\|\Lambda^s f\|_{L^{p_1}}\|g\|_{L^{p_2}}+\|f\|_{L^{p_3}}\|\Lambda^s g\|_{L^{p_4}}).
\end{equation}
\end{lemma}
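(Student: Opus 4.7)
The plan is to establish both inequalities simultaneously by means of Littlewood--Paley theory and Bony's paraproduct decomposition, which are the standard tools for turning fractional-derivative estimates into dyadic ones. Let $(\dot\Delta_j)_{j\in\Z}$ denote the homogeneous dyadic blocks associated with a smooth partition of unity, $S_j := \sum_{k\leq j-1}\dot\Delta_k$ the corresponding low-frequency cut-offs, and recall the characterization $\|h\|_{L^p}\sim\bigl\|\bigl(\sum_j|\dot\Delta_j h|^2\bigr)^{1/2}\bigr\|_{L^p}$, valid for all $1<p<\infty$. Bony's decomposition reads
\begin{equation*}
fg = T_f g + T_g f + R(f,g),\quad T_f g := \sum_j S_{j-1}f\,\dot\Delta_j g,\quad R(f,g):=\sum_{|j-k|\leq 1}\dot\Delta_j f\,\dot\Delta_k g.
\end{equation*}
Its key virtue is that within each summand the Fourier support is localised in either an annulus or a ball of size $\sim 2^j$, so that $\Lambda^s$ reduces, modulo a Mikhlin-type multiplier, to multiplication by $2^{js}$.

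For the product estimate, I would bound each of the three pieces separately. Since the spectrum of $S_{j-1}f\,\dot\Delta_j g$ sits in an annulus $\{|\xi|\sim 2^j\}$, the paraproduct $T_f g$ is quasi-orthogonal, and applying H\"older's inequality pointwise with $1/p=1/p_3+1/p_4$, together with boundedness of the maximal function controlling $S_{j-1}f$, yields $\|\Lambda^s T_f g\|_{L^p}\lesssim\|f\|_{L^{p_3}}\|\Lambda^s g\|_{L^{p_4}}$ after summation through the square-function characterization. Symmetrically, $T_g f$ produces the other term of the right-hand side. The remainder $R(f,g)$ is handled by the same scheme, with the twist that its spectrum lies in a \emph{ball} of radius $\sim 2^j$: Bernstein's inequality lets one absorb the $2^{js}$ factor onto whichever factor one wishes, and the condition $s>0$ is precisely what makes the resulting geometric series converge after Cauchy--Schwarz in $j$.

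For the commutator estimate, I would expand
\begin{equation*}
[\Lambda^s,f]g \,=\, [\Lambda^s,T_f]g \,+\, \bigl(\Lambda^s T_g f - T_{\Lambda^s g}f\bigr) \,+\, \bigl(\Lambda^s R(f,g) - R(f,\Lambda^s g)\bigr).
\end{equation*}
The first term is the genuine commutator. For each $j$, I would write $[\Lambda^s,S_{j-1}f]\dot\Delta_j g$ in Fourier variables and perform a first-order Taylor expansion of $|\xi|^s$ around the frequency centre of $\dot\Delta_j g$: the zeroth-order contribution cancels by construction and the linear one produces a factor $\nabla f\cdot\nabla_\xi|\xi|^s\sim |\xi|^{s-1}\nabla f$, which is the crucial one-derivative gain. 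This gives
\begin{equation*}
\|[\Lambda^s,S_{j-1}f]\dot\Delta_j g\|_{L^p}\,\lesssim\, \|\nabla f\|_{L^{p_1}}\,2^{j(s-1)}\|\dot\Delta_j g\|_{L^{p_2}},
\end{equation*}
and summation via the square function recovers $\|\nabla f\|_{L^{p_1}}\|\Lambda^{s-1}g\|_{L^{p_2}}$. The two remaining groups of terms are controlled by applying the product estimate (already established) to $T_g f$ and to $R(f,g)$ respectively, producing the complementary bound $\|\Lambda^s f\|_{L^{p_3}}\|g\|_{L^{p_4}}$.

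The main obstacle is precisely the paraproduct commutator $[\Lambda^s,T_f]$: because $|\xi|^s$ is not a polynomial, the classical Leibniz rule does not apply, and the one-derivative gain must be extracted from the frequency-cascade structure of the paraproduct (very low frequency of $f$ against high frequency of $g$) through a symbol Taylor expansion. A technically cleaner alternative is to invoke directly a Coifman--Meyer multiplier theorem for bilinear symbols of the appropriate homogeneity, which bundles the paraproduct manipulations into a single statement; either route relies on the assumption $1<p_i<\infty$ to guarantee availability of the Littlewood--Paley square-function characterization and of Mikhlin's multiplier theorem.
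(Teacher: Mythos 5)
The paper does not prove this lemma at all: it is quoted verbatim from Kenig--Ponce--Vega \cite{Ke91} (Lemma 2.10 there), where the proof goes through the Coifman--Meyer bilinear multiplier theorem and pointwise symbol estimates. Your Littlewood--Paley/Bony-paraproduct route is therefore genuinely different from the cited source, but it is a standard and perfectly viable alternative, and your sketch correctly identifies all the load-bearing points: the frequency localisation of $T_fg$ in annuli and of $R(f,g)$ in balls, the role of $s>0$ in summing the remainder and the low-frequency tails of $T_{\Lambda^sg}f$, the role of $1<p_i<\infty$ for the square-function characterisation and the Fefferman--Stein/Mikhlin machinery, and the fact that the one-derivative gain in the commutator comes solely from the paraproduct piece $[\Lambda^s,T_f]g$ via a first-order expansion exploiting the separation between the low frequency of $f$ and the high frequency of $g$. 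Two places where your write-up is looser than a complete proof: (i) the blockwise bound $\|[\Lambda^s,S_{j-1}f]\dot\Delta_jg\|_{L^p}\lesssim 2^{j(s-1)}\|\nabla f\|_{L^{p_1}}\|\dot\Delta_jg\|_{L^{p_2}}$ cannot be summed directly --- you need the pointwise version $|[\Lambda^s,S_{j-1}f]\dot\Delta_jg|\lesssim 2^{j(s-1)}M(\nabla f)\,M(\dot\Delta_jg)$ together with the quasi-orthogonality of the outputs (each is still supported in an annulus $2^j\mathcal{C}$) to pass through the square function; and (ii) the Taylor expansion of the symbol leaves a second-order remainder of size $|\xi|^{s-2}\nabla^2S_{j-1}f$, which must be absorbed via Bernstein ($\|\nabla^2S_{j-1}f\|\lesssim 2^j\|\nabla f\|$) against the extra factor $2^{-j}$. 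Both are routine, so the architecture stands; as you note, invoking Coifman--Meyer directly (as the cited reference effectively does) packages these verifications into a single black box at the cost of a heavier prerequisite.
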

\medbreak
The following result has been used several times to establish global a priori estimates.

\begin{lemma}\label{Le_7.000}
Let $X,$ $D,$ $W$ be three   nonnegative  measurable functions on $[0,T]$ such that $X$ is also differentiable. Assume that there exist two  nonnegative real numbers~$C$ and~$\alpha$
such that   
\begin{equation}\label{7.000}
\frac{d}{dt}X^2+D^2\leq CWX^2+CX^\alpha D^2.
\end{equation}
If, in addition, 
\begin{equation}
2CX^\alpha(0)\exp\biggl(\frac {C\alpha}{2}\int_0^T W\,dt\biggr) < 1\label{7.200},
\end{equation}
then, for any $t\in[0,T]$, one has
\begin{equation}
X^2(t)+\frac12\int_0^t D^2\,d\tau\leq X^2(0)\exp\biggl(C\int_0^t W\,d\tau\biggr)\cdotp\label{7.300}
\end{equation}
\end{lemma}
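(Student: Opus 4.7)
The plan is a standard continuity/bootstrap argument. The key observation is that if we knew a priori that $CX^\alpha(t)\leq 1/2$ on $[0,T],$ then the term $CX^\alpha D^2$ in \eqref{7.000} could be absorbed into the dissipation $D^2$ on the left-hand side, yielding
\begin{equation*}
\frac{d}{dt}X^2+\frac12 D^2\leq CWX^2,
\end{equation*}
whence Gronwall's lemma applied to $X^2$ would immediately give \eqref{7.300}. The whole difficulty is thus to justify that the smallness assumption \eqref{7.200} propagates, i.e.\ that $CX^\alpha(t)\leq 1/2$ for all $t\in[0,T].$

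The first step is to introduce
\begin{equation*}
T^{*}:=\sup\bigl\{t\in[0,T]\,:\,2CX^\alpha(s)\leq 1\ \text{for all}\ s\in[0,t]\bigr\}.
\end{equation*}
Since $X$ is differentiable, hence continuous, and since \eqref{7.200} gives the strict inequality $2CX^\alpha(0)<1,$ we have $T^{*}>0.$ The second step is to run the argument sketched above on the interval $[0,T^{*}]:$ absorbing $CX^\alpha D^2\leq D^2/2$ into the left-hand side of \eqref{7.000} and applying Gronwall's lemma yields
\begin{equation*}
X^2(t)+\frac12\int_0^t D^2\,d\tau\leq X^2(0)\exp\biggl(C\int_0^t W\,d\tau\biggr)\quad\text{for all}\ t\in[0,T^{*}].
\end{equation*}

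The third and decisive step is the bootstrap: raising the above inequality to the power $\alpha/2$ gives on $[0,T^{*}]$
\begin{equation*}
X^\alpha(t)\leq X^\alpha(0)\exp\biggl(\frac{C\alpha}{2}\int_0^t W\,d\tau\biggr)\leq X^\alpha(0)\exp\biggl(\frac{C\alpha}{2}\int_0^T W\,d\tau\biggr),
\end{equation*}
and the smallness assumption \eqref{7.200} then forces $2CX^\alpha(t)<1$ on $[0,T^{*}].$ By continuity of $X,$ this strict inequality prevents $T^{*}$ from being strictly less than $T$: otherwise one could extend the set in the definition of $T^{*}$ slightly beyond $T^{*},$ a contradiction. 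Hence $T^{*}=T,$ and the estimate \eqref{7.300} holds on all of $[0,T].$

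The only mild obstacle is to set up the continuity argument cleanly (one must ensure that $X$ is continuous, which is granted by differentiability, and that the bootstrap uses a strict inequality so that the standard open/closed argument works); no compactness or approximation issue arises since everything is pointwise in $t.$
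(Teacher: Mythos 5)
Your proof is correct and follows essentially the same route as the paper's: define $T^\star$ as the largest time up to which $2CX^\alpha\leq 1$, absorb $CX^\alpha D^2$ into the dissipation, apply Gronwall, and use the strict inequality from \eqref{7.200} together with continuity of $X$ to conclude $T^\star=T$. The only difference is that you spell out the step of raising the Gronwall estimate to the power $\alpha/2$, which the paper leaves implicit.
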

\begin{proof}    Let $T^\star$ be the largest $t\leq T$ such that 
\begin{equation}\label{6.999} 2C\sup_{0\leq t'\leq t} X^\alpha(t')\leq1.\end{equation}
Then, \eqref{7.000} implies that for all $t\in[0,T^*],$ we have\begin{equation}
\frac{d}{dt}X^2+\frac12 D^2\leq  CWX^2.\label{7.400}
\end{equation}
By Gronwall  lemma, this  yields for all $t\in[0,T^\star],$ 
$$X^2(t)+\frac12\int_0^t D^2\,d\tau\leq X^2(0)\exp\biggl(C\int_0^tW\,d\tau\biggr)\cdotp$$
Hence,  it is clear that if \eqref{7.200} is satisfied, then \eqref{6.999} is satisfied with 
a strict inequality. A continuity argument thus ensures that we must have $T^\star=T$
and thus \eqref{7.300} on $[0,T].$ 
\end{proof}

\bigbreak\bigbreak

\noindent\textsc{Universit\'e Paris-Est Cr\'eteil,  LAMA UMR 8050, 61 avenue du G\'en\'eral de Gaulle,  94010 Cr\'eteil
and Sorbonne Universit\'e, LJLL UMR 7598, 4 Place Jussieu, 75005 Paris}\par\nopagebreak
E-mail address: raphael.danchin@u-pec.fr
\medbreak
\noindent\textsc{Universit\'e Paris-Est Cr\'eteil,  LAMA UMR 8050, 61 avenue du G\'en\'eral de Gaulle,  94010 Cr\'eteil}\par\nopagebreak
E-mail address: jin.tan@u-pec.fr

\begin{thebibliography}{99}

\bibitem{Ac11} \textsc{M. Acheritogaray, P. Degond, A. Frouvelle and J.-G. Liu}, Kinetic formulation and global existence for the Hall-Magneto-hydrodynamics system, \emph{Kinet. Relat. Models}, {\bf  4} (2011) 901--918.

\bibitem{Ba01} \textsc{S.A. Balbus and  C. Terquem}, Linear analysis of the Hall effect in protostellar disks, \emph{Astrophys. J.}, {\bf 552} (2001) 235--247.

\bibitem{Ba11} \textsc{H. Bahouri, J. Y. Chemin and R. Danchin}, \textit{ Fourier Analysis and Nonlinear Partial Differential Equations}, {\bf 343}, Grundlehren der Mathematischen Wissenschaften, Springer 2011.

\bibitem{Be16} \textsc{M. Benvenutti and  L. Ferreira}, Existence and stability of global large strong solutions for the Hall-MHD system, 
{\em Differential Integral Equations}, {\bf  29} (2016) 977--1000.

\bibitem{Ch13} \textsc{D. Chae, P. Degond, and J.-G. Liu}, Well-posedness for Hall-magnetohydrodynamics, {\em Ann. IHP, Analyse non lin\'eaire}, {\bf 31} (2014) 555--565.

\bibitem{Ch14} \textsc{D. Chae and  J. Lee}, On the blow-up criterion and small data global existence for the Hall-magnetohydrodynamics, 
\emph{J. Differential Equations}, {\bf 256} (2014) 3835--3858.

\bibitem{Ch92} \textsc{J.-Y. Chemin}, Remarques sur l'existence globale pour le syst\`eme de Navier-Stokes incompressible, 
{\em SIAM J. Math. Anal.}, {\bf 23}(1) (1992) 20--28.


\bibitem{Da19} \textsc{R. Danchin and J. Tan}, On the well-posedness of the Hall-Magnetohydrodynamics system in critical Besov spaces, {\tt arXiv:1911.03246v1}.

\bibitem{Ds14}  \textsc{E. Dumas and  F. Sueur}, On the Weak Solutions to the Maxwell-Landau-Lifshitz Equations and to the Hall-Magneto-Hydrodynamic Equations, \emph{Commun. Math. Phys.}, {\bf  330} (2014) 1179--1225.

\bibitem{Fk64}  \textsc{H. Fujita and T. Kato}, On the Navier-Stokes initial value problem I,  \emph{Archive for Rational
Mechanics and Analysis}, {\bf  16} (1964) 269--315.

\bibitem{Fo91} \textsc{T.G. Forbes},  Magnetic reconnection in solar flares, \emph{Geophys. Astrophys. Fluid Dyn.}, {\bf 62} (1991) 15--36.

\bibitem{Ga02} \textsc{I. Gallagher, F. Planchon},
On global infinite energy solutions to the Navier-Stokes equations in two dimensions, {\em  Arch. Ration. Mech. Anal.}, {\bf 161} (2002) 307--337.

\bibitem{Ga03} \textsc{I. Gallagher, D. Iftimie and F. Planchon},
Asymptotics and stability for global solutions to the Navier-Stokes equations, {\em  Ann. Inst. Fourier, Grenoble}, {\bf 53}(5) (2003) 1387--1424.

\bibitem{Hu03}  \textsc{J.D. Huba}, Hall Magnetohydrodynamics - A Tutorial. In: B\"uchner J., Scholer M., Dum C.T. (eds) Space Plasma Simulation. Lecture Notes in Physics, {\bf 615}. Springer, 2003.

\bibitem{Ke91}\textsc{C. Kenig, G. Ponce and L. Vega}, \textit{Well-posedness of the initial value problem for the Korteweg de Vries equation},
{\em J. Amer. Math. Soc.}, {\bf 4} (1991) 323--347.


\bibitem{Li60}\textsc{M. J. Lighthill}, \textit{Studies on magneto-hydrodynamic waves and other anisotropic wave motions}, \emph{Philos. Trans. R. Soc.
Lond. Ser. A }, {\bf 252} (1960) 397--430.

\bibitem{Li19}\textsc{J. Li, Y. Yu and  W. Zhu}, A class large solution of the 3D Hall-magnetohydrodynamic equations, arXiv:1903.02299.

\bibitem{BM} \textsc{A.J. Majda and A.L. Bertozzi}, {\em Vorticity and Incompressible Flow}. Cambridge University Press, 2001.


\bibitem{Sh97} \textsc{D.A. Shalybkov and V.A. Urpin}, The Hall effect and the decay of magnetic fields,  \emph{Astron. Astrophys.}, {\bf 321} (1997) 685--690.

\bibitem{Wa04} \textsc{M. Wardle}, Star formation and the Hall effect, \emph{Astrophys. Space Sci.} {\bf 292} (2004) 317--323.

\bibitem{We161}  \textsc{S. Weng}, On analyticity and temporal decay rates of solutions to the viscous resistive Hall-MHD system, \emph {J. Differ. Equ.} {\bf 260}(8) (2016), 6504-6524.

\bibitem{We162}  \textsc{S. Weng}, Space-time decay estimates for the incompressible viscous resistive MHD and Hall-MHD equations, \emph {Journal of Functional Analysis} {\bf 270}(6) (2016), 2168-2187.

\bibitem{Wz15} \textsc{R. Wan and Y. Zhou}, On global existence, energy decay and blow-up criteria for the Hall-MHD system, \emph{J. Differ. Equ.}, {\bf 259} (2015) 5982--6008.

\bibitem{Wz19} \textsc{R. Wan and Y. Zhou},  Global well-posedness for the 3D incompressible Hall-magneto\-hydrodynamic equations with Fujita-Kato type initial data,
{\em J. Math. Fluid Mech.}, {\bf 21}(1) (2019), Art. 5, 16 pp.


\end{thebibliography}
\end{document}